\documentclass[12pt]{amsart}
\title[Multiplicity-free representations over Siegel domains]
{Multiplicity-free representations of certain
\\ nilpotent Lie groups over 
\\ Siegel domains of the second kind}
\author{Koichi Arashi}
\address{K. Arashi: Department of Mathematics, Tokyo Gakugei University, Nukuikita 4-1-1, Koganei,Tokyo 184-8501, Japan}
\email{arashi@u-gakugei.ac.jp}

\usepackage{authblk}
\usepackage{hyperref}
\usepackage{color}
\usepackage{enumitem}
\numberwithin{equation}{section}
\theoremstyle{definition}
\newtheorem{example}{Example}[section]
\newtheorem{definition}[example]{Definition}
\newtheorem{remark}[example]{Remark}
\theoremstyle{plain}
\newtheorem{lemma}[example]{Lemma}
\newtheorem{theorem}[example]{Theorem}
\newtheorem{proposition}[example]{Proposition}
\newtheorem{corollary}[example]{Corollary}

\newcommand{\U}{\mathbb{\mathbb{C}}^M}
\newcommand{\V}{{\mathbb{\mathbb{R}}^N}}
\newcommand{\cc}{s}
\newcommand{\iu}{i}
\newcommand{\lm}{\lambda}
\newcommand{\reference}{p}
\newcommand{\ua}{\"a}
\DeclareMathOperator{\Ad}{Ad}

\begin{document}
\setlength{\abovedisplayskip}{4.81pt}
\setlength{\belowdisplayskip}{5pt}
\date{}
\maketitle
\begin{abstract}
We investigate the multiplicity-freeness property for the holomorphic multiplier representations of affine transformation groups of a Siegel domain of the second kind.
We deal with the generalized Heisenberg group and its subgroups. Necessary and sufficient conditions for a specific representation to be multiplicity-free are provided.
We study the multiplicity-freeness property in relation to the geometrical notions of coisotropic action and visible action, and also the commutativity of the algebra of invariant differential operators.
\bigskip

\noindent {\it Mathematics Subject Classification:}
Primary: 22E27, Secondary: 32M05, 53C55
\\ {\it Keywords: Coherent state representation, coisotropic action, method of coadjoint orbits, multiplicity-free representation, Siegel domain, visible action}
\end{abstract}
\section{Introduction}

In this article, we study multiplicity-free representations of certain nilpotent affine transformation groups of Siegel domains of the second kind.
This class of domains includes (up to holomorphic equivalence) non-tube type bounded homogeneous domains, and as special cases, non-tube type bounded symmetric domains, and we regard these domains as K\ua hler manifolds by the Bergman metric.

Let $N$ and $M$ be positive integers.
For a regular cone $\Omega\neq\emptyset\subset\V$ and an $\Omega$-positive Hermitian map $Q:\U\times\U\rightarrow \mathbb{C}^N$, put
\begin{equation*}
\mathcal{D}(\Omega,Q):=\{(z,u)\in\mathbb{C}^N\times \U|\, \mathop{\mathrm{Im}}z-Q(u,u)\in\Omega\},
\end{equation*}
which is called a Siegel domain of the second kind.
For $x_0\in \V$ and $u_0\in\U$, let us denote by $n(x_0, u_0)$ the affine transformation
\begin{equation*}
\mathbb{C}^N\times\U\ni(z,u)\mapsto (z+x_0+2\iu Q(u,u_0)+\iu Q(u_0,u_0),u+u_0)\in\mathbb{C}^N\times\U.
\end{equation*}
Let $G:=\{n(x_0,u_0)\mid x_0\in\mathbb{R}, u_0\in\U\}$.
The maps $n(x_0,u_0)\,(x_0\in\V, u_0\in \U)$ preserve $\mathcal{D}(\Omega,Q)$, and hence $G$ acts on $\mathcal{D}(\Omega,Q)$.
We prove the following theorem.

\begin{theorem}[see Theorems \ref{Z8CrYFczzHas}, \ref{tyjqndhg5v55}]\label{p9UgSPw2}
For any $G$-equivariant holomorphic line bundle $L$ over $\mathcal{D}(\Omega,Q)$, the natural representation of $G$ on the space $\Gamma^{hol}(\mathcal{D}(\Omega,Q),L)$ of holomorphic sections of $L$ is multiplicity-free.
\end{theorem}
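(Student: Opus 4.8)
The plan is to exploit the fibration of $\mathcal{D}(\Omega,Q)$ by $G$-orbits, reduce the problem to an abelian (central) Fourier analysis, and recognize the pieces as Stone--von Neumann representations. First I would record the orbit geometry. A direct computation shows that the map $\rho\colon\mathcal{D}(\Omega,Q)\to\Omega$, $\rho(z,u):=\mathop{\mathrm{Im}}z-Q(u,u)$, is $G$-invariant: under $n(x_0,u_0)$ the Hermitian identity $Q(u+u_0,u+u_0)=Q(u,u)+2\mathop{\mathrm{Re}}Q(u,u_0)+Q(u_0,u_0)$ cancels exactly the terms produced in the $z$-coordinate. Counting real dimensions, $\dim_{\mathbb{R}}G=N+2M$ equals the real dimension of each fibre, so $G$ acts simply transitively on every fibre; hence the $G$-orbits are precisely the level sets of $\rho$, with orbit space $\Omega$ and orbit codimension $N$. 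The section $S:=\{(\iu y,0)\mid y\in\Omega\}$ is then a slice meeting each orbit exactly once, and $n(x_0,u_0)\cdot(\iu y,0)=(x_0+\iu(y+Q(u_0,u_0)),u_0)$.

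Next I would carry out the decomposition. The subgroup $Z:=\{n(x_0,0)\mid x_0\in\V\}\cong\V$ acts by $z\mapsto z+x_0$ and is the center of $G$; the commutator of $n(0,u_0)$ and $n(0,u_1)$ lies in $Z$ and is governed by the form $\mathop{\mathrm{Im}}Q$, so $G$ is a two-step nilpotent (generalized Heisenberg) group with $G/Z\cong\U$ abelian. I would trivialize $L$ $G$-equivariantly to turn $\Gamma^{hol}(\mathcal{D}(\Omega,Q),L)$ into a space of holomorphic functions carrying a multiplier representation, and then decompose this space under $Z$ by Fourier analysis in $\mathop{\mathrm{Re}}z$. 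This yields a direct integral $\int_{\Omega^{*}}^{\oplus}\mathcal{H}_{\xi}\,d\xi$ over the central characters $\xi$, supported on the dual cone $\Omega^{*}\subset\mathfrak{z}^{*}$, each $\mathcal{H}_{\xi}$ being a weighted Fock/Bargmann space on which $G/Z\cong\U$ acts. For $\xi\in\Omega^{*}$ the pairing $\xi\circ\mathop{\mathrm{Im}}Q$ is nondegenerate (this is where $\Omega$-positivity of $Q$ enters), so by the Stone--von Neumann theorem each $\mathcal{H}_{\xi}$ is a single irreducible with central character $\xi$. Distinct $\xi$ give inequivalent irreducibles, so no irreducible is repeated in the direct integral and the representation is multiplicity-free. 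In orbit-method terms, these $\mathcal{H}_{\xi}$ are exactly the coadjoint-orbit representations attached to the generic orbits through $\Omega^{*}$.

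The geometric characterizations then fit on top of this skeleton: the same picture shows the $G$-action is coisotropic for the Bergman symplectic form and strongly visible, and that the algebra of $G$-invariant differential operators is commutative; these are the three cited results, and any one of them re-derives multiplicity-freeness (strong visibility, via Kobayashi's propagation theorem, even delivers ``for every equivariant $L$'' in one stroke). The main obstacle is twofold. For the visibility route, the clean candidate involution $(z,u)\mapsto(-\overline{z},\psi(u))$ fixing $S$ and preserving fibres requires an antilinear involution $\psi$ with $Q(\psi u,\psi v)=Q(v,u)$, and for a general $\Omega$-positive $Q$ no such real structure need exist: the mixed quadratic $Q(\overline{u},\overline{u})-Q(u,u)$ cannot be absorbed into an anti-holomorphic correction, so one must either normalize $(\Omega,Q)$ to produce $\psi$ or bypass visibility via the central decomposition above. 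For the analytic route, the remaining technical point is to control a general, possibly nontrivial, multiplier and show it contributes only an overall character on each $Z$-isotypic component, so that it cannot manufacture multiplicities.
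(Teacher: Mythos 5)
Your geometric setup is correct (the $G$-orbits are the level sets of $\operatorname{Im}z-Q(u,u)$, the centre is $\{n(x,0)\}$, and $\omega_\xi$ is nondegenerate for $\xi\in\Omega^*$), and the Stone--von Neumann/coadjoint-orbit picture is the right landscape. But there is a genuine gap: you are proving the wrong statement. The theorem asserts multiplicity-freeness in the Faraut--Thomas sense for the whole Fr\'echet module $\Gamma^{hol}(\mathcal{D}(\Omega,Q),L)$, i.e.\ that any two \emph{irreducible $G$-invariant Hilbert subspaces} of $\mathcal{O}(\mathcal{D}(\Omega,Q))$ which are abstractly equivalent must \emph{coincide as subspaces} with proportional inner products. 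Your direct integral $\int_{\Omega^*}^{\oplus}\mathcal{H}_\xi\,d\xi$ decomposes one particular Hilbert space (essentially the Bergman-type $L^2$ space, which is Gindikin's decomposition, already cited in the introduction as known); it says nothing about whether two distinct invariant Hilbert subspaces of $\mathcal{O}$ could both realize the same Stone--von Neumann representation. Stone--von Neumann gives uniqueness up to abstract unitary equivalence, not uniqueness of the concrete holomorphic realization. The entire content of the paper's argument is to close exactly this gap: using the Corwin--Greenleaf restriction to the centre, the reproducing kernel of any irreducible invariant Hilbert subspace is shown to satisfy the first-order system \eqref{dpiKmathca} (Theorem \ref{ZHwubsUKw23T}), and Theorem \ref{tyjqndhg5v55} shows that system has a one-dimensional solution space, $f(z,u)=e^{-\iu\langle\nu,z\rangle}e^{h(u,c)}$; hence equivalent realizations have the same kernel and coincide. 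Nothing in your sketch plays this role.

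Two further points. First, a general invariant Hilbert subspace of $\mathcal{O}$ need not have central character in the open cone $\Omega^*$: degenerate characters $\xi$ (with $N_\xi\neq\{0\}$, e.g.\ $\xi=0$) do occur, Stone--von Neumann does not apply there, and the extra parameter $c$ coming from the line bundle genuinely distinguishes representations (cf.\ Proposition \ref{W2deVQNG}); so ``supported on $\Omega^*$, hence no repetition'' does not cover all invariant subspaces. Second, your fallback remark that strong visibility would deliver the result ``in one stroke'' is unavailable here, and you half-see why: by Proposition \ref{Xn9bCyEc} strong visibility of the $G$-action forces an antilinear $\psi$ with $Q(\psi u,\psi v)=Q(v,u)$, which exists only in the tube-type case; the theorem nevertheless holds for all $\mathcal{D}(\Omega,Q)$, so the multiplicity-freeness cannot be routed through visibility. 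Finally, the reduction of an arbitrary equivariant $L$ to the characters $m_c$ is not a ``technical point'' to be controlled later: the classification of holomorphic multipliers (Theorem \ref{Z8CrYFczzHas}, proved via the Oka--Grauert principle and an explicit $\bar\partial$-type analysis) is what pins down the exponent $h(u,c)$ in the coherent-state vector and is a substantive part of the proof.
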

Let $\pi_0$ be the representation of $G$ on the space $\mathcal{O}(\mathcal{D}(\Omega,Q))$ of holomorphic functions on $\mathcal{D}(\Omega,Q)$ defined by
\begin{equation*}
\pi_0(g)f(z,u):=f(g^{-1}(z,u))\quad(g\in G, f\in \mathcal{O}(\mathcal{D}(\Omega,Q)), (z,u)\in\mathcal{D}(\Omega,Q)).
\end{equation*}
The representation $\pi_0$ is a simple example of the representation of $G$ in Theorem \ref{p9UgSPw2}, and at least in this case, we can regard the multiplicity-freeness in a notably strong sense that any unitary representation of $G$ realized in $\mathcal{O}(\mathcal{D}(\Omega,Q))$ is multiplicity-free (see Definitions \ref{def:invariant hilbert}, \ref{def:mfu} and also Remark \ref{rem:mfc}).
We shall make some comments on related work, which motivated our study of these representations of $G$.
Gindikin \cite{Gindikin} gives an integral expression for the Bergman kernel of $\mathcal{D}(\Omega,Q)$ (see Theorem \ref{K3tmLfdV}), from which we can see a multiplicity-free direct integral decomposition of the unitary subrepresentation of $\pi_0$ on the space of holomorphic $L^2$-functions on $\mathcal{D}(\Omega,Q)$.
When $\mathcal{D}(\Omega,Q)$ is homogeneous, Ishi \cite[Section 4]{Ishi99} gives the direct integral decompositions of unitary subrepresentations of $\pi_0$, which are the restrictions of representations of a maximal connected real split solvable Lie subgroup of the holomorphic automorphism group of $\mathcal{D}(\Omega,Q)$.

To show Theorem \ref{p9UgSPw2}, we first classify all $G$-equivariant holomorphic line bundles over $\mathcal{D}(\Omega,Q)$.
Next, using a description of the multiplicities for the restriction of a unitary representation by Corwin and Greenleaf \cite{CG88}, we show that any infinite dimensional irreducible unitary representation of $G$ realized in $\Gamma^{hol}(\mathcal{D}(\Omega,Q),L)$ is a coherent state representation in the sense of Lisiecki \cite{Lisiecki95}.
Accordingly, such a representation has a vector, on which a certain complex Lie algebra acts by scalars.
We then complete the proof of Theorem \ref{p9UgSPw2} by showing such a vector in $\Gamma^{hol}(\mathcal{D}(\Omega,Q),L)$ is determined uniquely up to a constant.

Another motivation of our study of representations of $G$ is geometrical aspects of multiplicity-free representations.
For instance, coisotropic action \cite{HW90}, spherical variety \cite{VK79}, and visible action \cite{Kobayashi05, Kobayashi13} are distinct notions that provide machinery for generating multiplicity-free representations.
Not limited to the case of $G$ only, we study certain subgroups of $G$.
Let $W\subset \U$ be a real subspace, and put
\begin{equation*}
G^W:=\{n(x,u)\mid x\in \V,u\in W\}.
\end{equation*}
We first address the problem of the existence of the strongly visible action (Definition \ref{b5SgLeVG}) of $G^W$.
\begin{theorem}[see Theorem \ref{th:real form2}]\label{th:real form}
The action of $G^W$ on $\mathcal{D}(\Omega,Q)$ is strongly visible with respect to an involutive anti-holomorphic diffeomorphism if and only if $W$ contains a real form $W_0$ of $\U$ such that
\begin{equation}\label{eq:imq}
\displaystyle\mathop{\mathrm{Im}} Q(W_0,W_0)=0.
\end{equation}
\end{theorem}
We note that, by the multiplicity-free theorem \cite[Corollary 2.3]{Kobayashi13}, the natural representation $ \pi_0|_{G^W}$ is multiplicity-free, assuming the visibility condition, which is a key step in the proof of Theorem \ref{th:real form}.

Concerning the strongly visible action of a non-reductive Lie group, Kobayashi pointed out in \cite{Kobayashi07} that for a Hermitian symmetric space $H/K$ of noncompact type, the action of a maximal unipotent subgroup of $H$ is strongly visible.
Note that our group $G^W$ is unipotent, though it is not, in general, a maximal unipotent subgroup of $H$ in this case.
For the strongly visible action of an affine automorphism group containing $G$ as a subgroup, see \cite{Arashi}.
In contrast, we deal with smaller groups, and Theorem \ref{th:real form} reveals a nontrivial constraint concerning the strongly visible action.
Note that recently, several authors establish equivalences between the strong visibility and various multiplicity-free conditions.
In \cite[Theorem 1.2]{Tanaka22}, Tanaka proved that for a connected complex reductive group $H$ and a spherical $H$-variety $D$, the action of a compact real form of $H$ is strongly visible.
For a non-reductive group, Baklouti and Sasaki \cite{BS21} studied the quasi-regular representations of the Heisenberg group, and related the multiplicity-freeness of such a representation to the strongly visibility of a group action on a complex Heisenberg homogeneous space.

The condition \eqref{eq:imq} can be regarded as a geometrical constraint on $\mathcal{D}(\Omega,Q)$ (see \cite[Proposition 1.3]{Gindikin}), and we have the following corollary.
\begin{corollary}\label{cor:tube}
The action of $G$ on $\mathcal{D}(\Omega,Q)$ is strongly visible with respect to an involutive anti-holomorphic diffeomorphism if and only $\mathcal{D}(\Omega,Q)$ is holomorphically equivalent to a tube domain.
\end{corollary}

Theorem \ref{p9UgSPw2} is a part of a study of the more general group $G^W$, which will be dealt with in another paper.
In this article, we focus on the special case that $\dim W=M$.
For a complex manifold $D$ and a Lie group $H$ acting holomorphically on $D$, let $\mathbb{D}_{H}(D)$ be the algebra of $H$-invariant differential operators on $D$ with holomorphic coefficients.
We prove the following theorem.
While the implications \ref{cond:mainvisible}$\Rightarrow$\ref{cond:mainmf}, \ref{cond:mainQ}$\Rightarrow$\ref{cond:mainvisible} of the theorem are included in the previous arguments about Theorem \ref{th:real form} and the proof of Theorem \ref{th:real form2}, the remaining parts (\ref{cond:mainmf}$\Leftrightarrow$\ref{cond:mainQ}$\Leftrightarrow$\ref{cond:maincois}$\Leftrightarrow$\ref{cond:maincomm}) constitute one of the main contributions of this article.
\begin{theorem}\label{X6mQfvEy}
For a real form $W\subset\U$, the following are equivalent.
\begin{enumerate}[label=\textup{(\roman*)}]
\item
The representation $\pi_0|_{G^W}$ is multiplicity-free;\label{cond:mainmf}
\item
$\mathop{\mathrm{Im}}Q(W,W)=0$;\label{cond:mainQ}
\item
Any $G^W$-orbit in $\mathcal{D}(\Omega,Q)$ is a coisotropic submanifold;\label{cond:maincois}
\item
$\mathbb{D}_{G^W}(\mathcal{D}(\Omega,Q))$ is commutative;\label{cond:maincomm}
\item
The action of $G^W$ on $\mathcal{D}(\Omega,Q)$ is strongly visible.\label{cond:mainvisible}
\end{enumerate}
\end{theorem}
We shall make some remarks.
As regards (iii), Huckleberry and Wurzbacher {\cite[Theorem 7]{HW90}} proved a multiplicity-free theorem for the representation of a compact Lie group on the space of holomorphic $L^2$ sections of an equivariant holomorphic line bundle over a K\ua hler manifold.
Concerning (iv), Faraut and Thomas \cite[Theorem 4]{FT99} gave, for a complex manifold $D$ and  a Lie group $H$ acting holomorphically on $D$, some sufficient conditions for $\mathbb{D}_H(D)$ to be commutative.

To prove (i) $\Rightarrow$ (ii), we use the aforementioned result by Corwin and Greenleaf.
For the proof of (ii) $\Rightarrow$ (iii), we first show that (ii) implies the visibility of the action of $G^W$.
Next we prove (iii), following the arguments for visible actions on K\ua hler manifolds in \cite[\S 4.3]{Kobayashi05}.
The integral expression of the Bergman kernel is also an important ingredient of the proof of (ii) $\Leftrightarrow$ (iii).

This article is organized as follows.
In Sect. \ref{Ke2QcWAT}, we fix some notations about Siegel domains and affine transformation groups, and review the notion of visible aciton.
Then we give a proof of Theorem \ref{th:real form}.
In Sect. \ref{Ee3FnBHw}, we classify isomorphism classes of $G$-equivariant holomorphic line bundles over $\mathcal{D}(\Omega,Q)$.
In Sect. \ref{u6EAbPUB} we prove Theorem \ref{p9UgSPw2} based on the classification obtained in Sect. \ref{Ee3FnBHw}.
In Sect. \ref{Fi4dG3gb}, we prove \ref{cond:mainmf}$\Leftrightarrow$\ref{cond:mainQ}$\Leftrightarrow$\ref{cond:maincois}$\Leftrightarrow$\ref{cond:maincomm} of Theorems \ref{X6mQfvEy}.

\section{Visible action on Siegel domain}\label{Ke2QcWAT}
The aim of this section is to provide a proof of Theorem \ref{th:real form}.
We deduce constraints on $Q$ from certain invariant functions, which also separate the group orbits in $\mathcal{D}(\Omega,Q)$.
We will do this by linearizing the problem.
Throughout this article, for a Lie group, we denote its Lie algebra by
the corresponding Fraktur small letter.
Also for a vector space or a Lie algebra $V$ over $\mathbb{R}$, we denote by $V_\mathbb{C}$ its complexification $V\otimes_\mathbb{R}\mathbb{C}$.
\subsection{Generalized Heisenberg group}

Let $\Omega\subset \V$ be a {\it regular cone}, that is, a nonempty open convex cone which contains no entire straight line.
Let $Q:\U\times \U\rightarrow \mathbb{C}^N$ be an {\it $\Omega$-positive} Hermitian map, i.e. a sesquilinear map satisfying $Q(u,u)\in\overline{\Omega}\backslash\{0\}\,(u\neq 0)$, where $\overline{\Omega}$ is the closure of $\Omega$.
The set $G$ is closed under composition, and has the natural structure of a Lie group, which we call the {\it generalized Heisenberg group}.
We identify the Lie algebra $\mathfrak{g}$ of $G$ with $\V\oplus\U$ so that $\exp(x,u)=n(x,u)\,(x\in \V, u\in \U)$ holds.
Then we have the equality
\begin{equation}\label{4im}
[u,v]=4\mathop{\mathrm{Im}}Q(u,v)\quad(u,v\in\U).
\end{equation}
Put $\mathcal{D}(\Omega):=\{z\in\mathbb{C}^N\mid \mathop{\mathrm{Im}}z\in\Omega\}$.

\subsection{Strongly visible action and constraints on $Q$}\label{t2AC4nXS}
We first recall the notion of visibility.
Suppose that a Lie group $H$ acts on a connected complex manifold $D$ by holomorphic automorphisms.

\begin{definition}[{\cite[\S 3]{Kobayashi05}}]\label{b5SgLeVG}
\begin{enumerate}
\item
We call the action {\it previsible} if there exists a totally real submanifold $S$ in $D$ and a (non-empty) $H$-invariant open subset $D'$ of $D$ such that $S$ meets every $H$-orbit in $D'$.
\item
A previsible action is called {\it visible} if $J_x(T_xS)\subset T_x(H\cdot x)$ for all $x\in S$, where $J_x\in \mathrm{End}(T_xD)$ denotes the complex structure.
\item
A previsible action is called {\it strongly visible} if there exists an anti-holomorphic diffeomorphism $\sigma$ of $D'$ such that
\begin{enumerate}
\item
$\sigma|_S=\mathrm{id}$,
\item
$\sigma \text{ preserves each }H\text{-orbit of }D'$.\label{cond:preserve}
\end{enumerate}\label{cond:sv}
\end{enumerate}
\end{definition}
In this article, our focus is on the case that $D=\mathcal{D}(\Omega,Q)$.
The standard complex structure on $\U$ will be denoted by $j$.
\begin{remark}\label{rem: (H)}
The condition \eqref{cond:preserve} is sufficient to ensure the multiplicity-freeness property for a particular representation, as discussed in \cite[Corollary 2.4]{Kobayashi13}, and \cite[p. 389]{FT99} when $\sigma$ is involutive.
\end{remark}

\begin{proposition}\label{Xn9bCyEc}
Suppose that the action of $G$ on $\mathcal{D}(\Omega,Q)$ is strongly visible.
Then there exists an antilinear map $A:\U\rightarrow \U$ such that
\begin{equation}\label{DFiWga8x}
Q(u,v)=Q(Av,Au)\quad(u,v\in \U).
\end{equation}
\end{proposition}

\begin{proof}
Let $\mathcal{D}'\neq\emptyset\subset \mathcal{D}(\Omega,Q)$ be a $G$-invariant open set and $\sigma:\mathcal{D}'\rightarrow\mathcal{D}'$ an anti-holomrophic diffeomorphism preserving all $G$-orbits in $\mathcal{D}'$.
For $(z,u)\in\mathcal{D}'$, we write $\sigma(z,u)=(\sigma_1(z,u),\sigma_2(z,u))$ with $\sigma_1(z,u)\in\mathbb{C}^N$, $\sigma_2(z,u)\in\U$.
Put $\overline{D'}:=\{(z,u)\in\mathbb{C}^N\times\U\mid (\overline{z},\overline{u})\in D'\}$.
Then for $(z,u,\overline{\zeta},\overline{v})\in\mathcal{D}'\times\overline{\mathcal{D}'}$, we have
\begin{equation}\label{r5YJcgyx}
\frac{1}{2\iu}(z-\overline{\zeta})-Q(u,v)
=\frac{1}{2\iu}(\sigma_1(\zeta,v)-\overline{\sigma_1(z,u)})-Q(\sigma_2(\zeta,v),\sigma_2(z,u)).
\end{equation}

Expand $\overline{\sigma_1(z,u)}$, $\sigma_1(\zeta,v)$, $\overline{\sigma_2(z,u)}$, and $\sigma_2(\zeta,v)$ into Taylor series around any point $(z_0,u_0,\overline{\zeta_0},\overline{v_0})\in\mathcal{D}'\times\overline{\mathcal{D}'}$ with respect to all the variables $z, u, \overline{\zeta}$, and $\overline{v}$ and compare the coefficients of monomials of the form
\begin{equation*}
p(z-z_0,u-u_0)q(\overline{\zeta}-\overline{\zeta_0},\overline{v}-\overline{v_0})\quad(p \text{ and }q \text{ are monomials of degree }1)
\end{equation*}
on both sides of \eqref{r5YJcgyx}. 
Then we see that there exists an anti-linear map $\mathbb{C}^N\oplus\U\ni(z,u)\mapsto d\sigma_2(z,u)\in\U$ such that 
\begin{equation*}
Q(u,v)=Q(d\sigma_2(\zeta,v),d\sigma_2(z,u)).
\end{equation*}
Letting $u=v=0$ and $z=\zeta$, we see that $d\sigma_2(z,u)=d\sigma_2(0,u)$ by the $\Omega$-positivity of $Q$.
\end{proof}

We shall present the proof of Theorem \ref{th:real form}, but in a slightly different form with a superfluous condition.
Let $W\subset \U$ be a real subspace.
For $\xi\in \left(\V\right)^*$, let $\omega_\xi(\cdot,\cdot):\U\times\U\rightarrow\mathbb{R}$ be the skew-symmetric bilinear form defined by
\begin{equation*}
\omega_\xi(u,v):=\langle \xi,[u,v]\rangle\quad(u,v\in\U).
\end{equation*}
Put
\begin{equation*}
W^{\perp,\omega_\xi}:=\{u\in \U \mid \omega_\xi(u,w)=0 \text{ for all }w\in W\}.
\end{equation*}
Let
\begin{equation*}
\Omega^*:=\{\xi\in(\V)^*\mid \langle\xi, y\rangle>0\text{ for all }y\in\overline{\Omega}\backslash\{0\}\},
\end{equation*}
which is also a regular cone.
The precise statement of a different version of Theorem \ref{th:real form} is given as follows.
\begin{theorem}\label{th:real form2}
The action of $G^W$ on $\mathcal{D}(\Omega,Q)$ is strongly visible with respect to an involutive anti-holomorphic diffeomorphism if and only if $W$ contains a real form $W_0$ of $\U$ such that
\begin{equation*}
W^{\perp,\omega_\xi}\subset W_0\quad(\xi\in\Omega^*)\quad\text{and}\quad\displaystyle\mathop{\mathrm{Im}} Q(W_0,W_0)=0.
\end{equation*}
\end{theorem}

\begin{proof}
For the ``if'' part, the map
\begin{equation*}\sigma:\mathbb{C}^N\times\U\ni(x+\iu y,u)\mapsto(-x+\iu y,-\overline{u}^{W_0})\in\mathbb{C}^N\times\U\quad(x,y\in\V, u\in\U)
\end{equation*} defines an anti-holomorphic diffeomorphism of $\mathcal{D}(\Omega,Q)$, where $\overline{u}^{W_0}$ denotes the complex conjugate of $u$ with respect to the real form $W_0$.
We have
\begin{equation*}
n(2x+4Q(w',w),2w)(-x+\iu y,-w+jw')=(x+\iu y,w+jw')\quad(w,w'\in W_0),
\end{equation*}
which shows that $\sigma$ preserves each $G^{W_0}$-orbit in $\mathcal{D}(\Omega,Q)$.
Also, we can confirm other conditions in Definition \ref{b5SgLeVG} (3).

For the ``only if'' part, we have
\begin{equation*}
W^{\perp,\omega_\xi}\subset W\quad(\xi\in\Omega^*),
\end{equation*}
which follows from the multiplicity-free theorem (see for instance \cite[Theorem 3]{FT99}, \cite[Corollary 2.3]{Kobayashi13}) together with Propositions \ref{Thecorresp} and \ref{z8KgCjZK} below.
Also, we have $W+jW=\U$, since the action of $G^W$ is visible (see \cite[Theorem 4]{Kobayashi05}).
Let $\xi\in\Omega^*$ and $P:=W\cap jW$.
Then we have
\begin{equation}
\U=W\oplus jW^{\perp,\omega_\xi},\quad W=W^{\perp,\omega_\xi}\oplus P.
\end{equation}
For $u\in\U$, let us denote by $u^\xi$ the projection of $u$ on $W^{\perp,\omega_\xi}\oplus j W^{\perp,\omega_\xi}$.
In the proof of Proposition \ref{Xn9bCyEc}, we now get another relation 
\begin{equation*}
\sigma_2(z,u)^\xi-\overline{\sigma_2(\zeta,v)^\xi}=v^\xi-\overline{u}^\xi\quad((z,u,\overline{\zeta},\overline{v})\in D'\times\overline{D'}),
\end{equation*}
where the bar denotes the complex conjugation
\begin{equation*}
W^{\perp,\omega_\xi}\oplus jW^{\perp, \omega_\xi}\ni w+jw'\mapsto w-jw'\in W^{\perp,\omega_\xi}\oplus jW^{\perp,\omega_\xi},
\end{equation*}
and hence $P$ is $A$-stable and we have
\begin{equation*}
(Aw)^\xi=-\overline{w^\xi}\quad(w\in \U).
\end{equation*}
We may assume that $A$ is also involutive, and if we put
\begin{equation*}
P_0:=\{u\in P\mid Au=-u\},
\end{equation*}
then
\begin{equation*}
W_0:=\{u\in \U\mid Au=-u\}=W^{\perp,\omega_\xi}\oplus P_0
\end{equation*}
satisfies the desired properties.
Indeed, it follows from \eqref{DFiWga8x} that
\begin{equation*}\label{eq:real}
Q(w,w')=Q(Aw',Aw)=Q(w',w)\quad(w,w'\in W_0).
\end{equation*}
\end{proof}

\begin{remark}
For a certain class of domains, the multiplicity-freeness property of $\pi_0|_{G^W}$ is represented by
\begin{equation}\label{eq:mf}
\mathop{\mathrm{Im}}Q(W^{\perp,\omega_\xi},W^{\perp,\omega_\xi})=0\quad(\xi\in\Omega^*),
\end{equation}
which will be dealt with in another paper.
Note that \eqref{eq:mf} is always satisfied for $W=\U$ and Theorem \ref{p9UgSPw2} is consistent with this fact.
On the other hand, the condition in Theorem \ref{th:real form2} coincides with \eqref{eq:mf} when $\dim W=M$.
\end{remark}

\section{Classification of equivariant holomorphic line bundles}\label{Ee3FnBHw}

Our aim in this section is to classify the isomorphism classes of $G$-equivariant holomorphic line bundles over $\mathcal{D}(\Omega,Q)$ (see Theorem \ref{Z8CrYFczzHas} below).
Our proof is based on the Oka-Grauert principle, which shows that any holomorphic line bundles over $\mathcal{D}(\Omega,Q)$ is holomorphically trivial, and the classifying problem descends to the one for holomorphic multipliers.
\begin{definition}
A smooth function $m:G\times \mathcal{D}(\Omega,Q)\rightarrow\mathbb{C}^\times$ is called a {\it multiplier} if the following cocycle condition is satisfied:
\begin{equation*}
m(gg',(z,u))=m(g,g'(z, u))m(g',(z, u))\quad(g,g'\in G, (z,u)\in \mathcal{D}(\Omega,Q)),
\end{equation*}
Moreover, a multiplier $m$ is called a {\it holomorphic multiplier} if $m(g,(z,u))$ is holomorphic in $(z,u)\in \mathcal{D}(\Omega,Q)$.
\end{definition}

For a holomorphic multiplier $m:G\times\mathcal{D}(\Omega,Q)\rightarrow \mathbb{C}^\times$, we define a $G$-equivariant holomorphic line bundle over $\mathcal{D}(\Omega,Q)$. Let us consider the action of $G$ on the trivial bundle $\mathcal{D}(\Omega,Q)\times\mathbb{C}$ given by 
\begin{equation}\label{gzzetagzmg}\begin{split}
\mathcal{D}(\Omega,Q)\times\mathbb{C}\ni ((z,u),\zeta)\mapsto(g(z,u),m(g,(z,u))\zeta)\in\mathcal{D}(\Omega,Q)\times\mathbb{C}\quad(g\in G).
\end{split}\end{equation}
By Chen \cite{Chen73}, the domain $\mathcal{D}(\Omega,Q)$ is a Stein manifold.
By the Oka-Grauert principle, every $G$-equivariant holomorphic line bundle over $\mathcal{D}(\Omega,Q)$ is isomorphic to the one defined as in \eqref{gzzetagzmg}.
Put $h(u,v):=\sum_{n=1}^Mu_n\overline{v_n}\,(u,v\in\U)$.
We have the following theorem.
\begin{theorem}\label{Z8CrYFczzHas}
The $G$-equivariant bundles defined as in \eqref{gzzetagzmg} with the one-dimensional representations 
\begin{equation*}
m_c:G\ni n(x_0,u_0)\mapsto e^{h(c,u_0)}\in\mathbb{C}^\times\quad(c\in \U)
\end{equation*}
form a complete set of representatives of the isomorphism classes of $G$-equivariant holomorphic line bundles over $\mathcal{D}(\Omega,Q)$.
\end{theorem}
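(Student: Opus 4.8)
The plan is to reduce the classification of $G$-equivariant holomorphic line bundles to a computation of holomorphic multipliers modulo coboundaries, and then to carry out that computation infinitesimally. Since $\mathcal{D}(\Omega,Q)$ is Stein (Chen) and, being a Siegel domain, contractible, the Oka--Grauert principle already identifies every equivariant bundle with one of the form \eqref{gzzetagzmg}; two multipliers $m,m'$ define isomorphic bundles precisely when they are cohomologous, i.e. $m'(g,p)=\phi(g\cdot p)\phi(p)^{-1}m(g,p)$ for some $\phi\in\mathcal{O}^\times(\mathcal{D}(\Omega,Q))$. Because $\mathcal{D}(\Omega,Q)$ is simply connected and Stein, the exponential $\mathcal{O}\to\mathcal{O}^\times$ is surjective, so I may write $m=e^{a}$ for a smooth additive cocycle $a\colon G\times\mathcal{D}(\Omega,Q)\to\mathbb{C}$, holomorphic in the second variable and satisfying $a(gg',p)=a(g,g'p)+a(g',p)$; the branch is fixed globally by $a(e,\cdot)=0$ and connectedness of $G$. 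Coboundaries become $a(g,p)=\psi(g\cdot p)-\psi(p)$ with $\psi\in\mathcal{O}(\mathcal{D}(\Omega,Q))$, so everything reduces to classifying such $a$ up to coboundary.

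Next I differentiate. For $X\in\mathfrak{g}$ set $\beta_X:=\frac{d}{dt}\big|_{t=0}a(\exp tX,\cdot)\in\mathcal{O}(\mathcal{D}(\Omega,Q))$; the cocycle identity yields the Chevalley--Eilenberg relation $\beta_{[X,Y]}=\xi_X\beta_Y-\xi_Y\beta_X$, where $\xi_X$ is the holomorphic vector field generating the flow of $X$, and coboundaries become $\beta_X=\xi_X\psi$. A direct computation gives $\xi_{(x,0)}=\sum_k x_k\partial_{z_k}$ for a central element and $\xi_{(0,w)}=2\iu\sum_k Q(u,w)_k\partial_{z_k}+\sum_j w_j\partial_{u_j}$. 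Since $G$ is nilpotent, $\exp$ is surjective, and a short argument using the cocycle identity (differentiating $a(\exp sX\cdot\exp tX,p)$ in $s$) shows that an additive cocycle with vanishing $\beta$ is identically zero; hence $\beta$ is a Lie-algebra coboundary if and only if $a$ is a group coboundary, and the infinitesimal classification is faithful.

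The core is to put $\beta$ in normal form. Writing $\beta_k:=\beta_{(e_k,0)}$, the relations among central elements give $\partial_{z_k}\beta_l=\partial_{z_l}\beta_k$. The main technical point is that this tuple is a coboundary, i.e. there is $\psi\in\mathcal{O}(\mathcal{D}(\Omega,Q))$ with $\partial_{z_k}\psi=\beta_k$: this is the vanishing of the degree-one relative cohomology of the tube fibration $\mathcal{D}(\Omega,Q)\to\U$, $(z,u)\mapsto u$, which I expect to be the crux and which I would establish through the Stein property together with a relative holomorphic Poincaré lemma, the fibers $\{\mathrm{Im}\,z\in Q(u,u)+\Omega\}$ being convex. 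After subtracting this coboundary I may assume $\beta_k=0$. Then the mixed relations force $\partial_{z_k}\beta_{(0,w)}=0$, so $\phi_w:=\beta_{(0,w)}$ is an entire function of $u\in\U$ depending $\mathbb{R}$-linearly on $w$; and, since $[(0,w),(0,w')]$ is central and $\beta$ now vanishes on the center, the relations among the $(0,w)$ reduce to $D_w\phi_{w'}=D_{w'}\phi_w$ with $D_w=\sum_j w_j\partial_{u_j}$. Splitting $\phi_w=\phi_w^{+}+\phi_w^{-}$ into its $\mathbb{C}$-linear and conjugate-linear parts in $w$ and feeding $w'\mapsto\iu w'$ into this relation separates it into $D_w\phi_{w'}^{-}=0$ and $D_w\phi^{+}_{w'}=D_{w'}\phi^{+}_w$. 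The first says $\phi^{-}_w$ is constant in $u$; the second, via the Poincaré lemma on the convex $\U$, gives $\phi^{+}_w=D_w\Psi$ for some entire $\Psi(u)$, i.e. a coboundary that I remove using $\psi'=\Psi$, which, being independent of $z$, preserves $\beta_k=0$. Hence $\beta_{(0,w)}=\phi^{-}_w$ is a conjugate-linear functional of $w$, constant in $u$, and therefore equals $h(c,w)$ for a unique $c\in\U$.

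Finally I integrate and check uniqueness. The normalized cocycle $\beta^{(c)}$ with $\beta^{(c)}_{(e_k,0)}=0$ and $\beta^{(c)}_{(0,w)}=h(c,w)$ is exactly the infinitesimal cocycle of the character $m_c$, so by the faithfulness above $m$ is cohomologous to $m_c$ and the two bundles are isomorphic. For distinctness, if the bundles for $m_c$ and $m_{c'}$ agree then $\beta^{(c)}-\beta^{(c')}=\xi_\bullet\psi$; the central components give $\partial_{z_k}\psi=0$, so $\psi=\psi(u)$, and then $h(c-c',w)=D_w\psi$ is simultaneously conjugate-linear and $\mathbb{C}$-linear in $w$, forcing $c=c'$. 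This shows the $m_c$ form a complete and irredundant set of representatives. The hardest part is the global solvability used to kill the central tuple $(\beta_k)$; the remaining work is the bookkeeping that isolates the conjugate-linear functional $h(c,\cdot)$ as the sole invariant.
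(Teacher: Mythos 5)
Your route is genuinely different from the paper's. You linearize: pass to a global additive cocycle, differentiate to a Lie-algebra $1$-cocycle $\beta$ with values in $\mathcal{O}(\mathcal{D}(\Omega,Q))$, and normalize $\beta$ by coboundaries. The paper never differentiates in the group variable; instead it uses the group law to exhibit $m$ as the coboundary of the single (non-holomorphic) function $m'(z,u)=m(n(0,u),(z-\iu Q(u,u),0))$, then splits a local logarithm of $m'$ as $F(y-Q(u,u))+h(c,u)+L(z,u)$ with $L$ holomorphic; the term $F(y-Q(u,u))$ is $G$-invariant and cancels in the coboundary, leaving $m_c$ times a holomorphic coboundary. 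Your bookkeeping — the vanishing of $\beta$ on the center, the splitting of $\phi_w$ into $\mathbb{C}$-linear and conjugate-linear parts in $w$, and the type argument for distinctness of the $m_c$ — is correct, and the distinctness argument is arguably cleaner than the paper's differentiation of $m_{c'}m_c^{-1}$ at a base point.

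The one real gap sits exactly where you put it: global solvability of $\partial\psi/\partial z_k=\beta_k$ with $\psi$ holomorphic on all of $\mathcal{D}(\Omega,Q)$. Convexity of the fibers plus a fiberwise Poincar\'e lemma does not suffice as stated. The natural primitive $\psi_0(z,u)=\int_0^1\sum_k\beta_k\bigl(c(u)+t(z-c(u)),u\bigr)\,(z_k-c(u)_k)\,dt$ needs a base section $c(u)$ of the fibration, and no \emph{holomorphic} choice exists for $M\geq 1$: one would need $\mathop{\mathrm{Im}}c(u)-Q(u,u)\in\Omega$ for all $u$, i.e.\ a pluriharmonic function dominating the positive definite form $\langle\xi,Q(u,u)\rangle$ for $\xi\in\Omega^*$, which the mean value property on complex lines rules out. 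With the smooth choice $c(u)=\iu\reference+\iu Q(u,u)$ the primitive $\psi_0$ is holomorphic in $z$ but not in $u$. The repair is standard but must be said: since $\partial\psi_0/\partial\overline{u_\alpha}$ is annihilated by every $\partial/\partial z_k$ and $\partial/\partial\overline{z_k}$, it is constant on the connected fibers and descends to a $\overline\partial$-closed $(0,1)$-form on $\U$, which is $\overline\partial$-exact; subtracting the resulting $\rho(u)$ leaves $\partial\psi/\partial z_k=\beta_k$ intact and makes $\psi$ holomorphic. (This is precisely the point at which the paper instead isolates the $G$-invariant non-holomorphic summand $F(y-Q(u,u))$ and globalizes by analytic continuation.) With that step supplied, and with the minor care needed to fix a single-valued branch of $\log m$ jointly in $(g,p)$ using simple connectedness of $G$ and of $\mathcal{D}(\Omega,Q)$, your proof is complete.
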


Fix $\reference\in\Omega$.
To prove Theorem \ref{Z8CrYFczzHas}, we first see that $m_c\,(c\in \U)$ define mutually nonisomorphic $G$-equivariant holomorphic line bundles.
Suppose that the representations $m_c, m_{c'}\,(c,c'\in \U)$ define isomorphic $G$-equivariant holomorphic line bundles.
According to the next lemma, we have
\begin{equation}\label{mult2gzfgz}
m_{c'}(g,(z,u))=f_{c,c'}(g(z,u))f_{c,c'}(z,u)^{-1}m_c(g,(z,u))\quad(g\in G)
\end{equation}
for some holomorphic function $f_{c,c'}:\mathcal{D}(\Omega,Q)\rightarrow \mathbb{C}^\times$.
The lemma below will be used also in the proof of Theorem \ref{Z8CrYFczzHas}.
\begin{lemma}[{\cite[Lemma 1]{Ishi11}}]\label{equivalentline}
The trivial line bundles over $\mathcal{D}(\Omega,Q)$ together wih $G$-actions  \eqref{gzzetagzmg} for two holomorphic mutlipliers $m$ and $m'$ are isomorphic as $G$-equivariant holomorphic line bundles if and only if there exists a holomorphic function $f:\mathcal{D}(\Omega,Q)\rightarrow \mathbb{C}^\times$ such that
\begin{equation}
m'(g,(z,u))=f(g(z,u))f(z,u)^{-1}m(g,(z,u))\quad (g\in G).
\end{equation}
\end{lemma}
For a $\mathbb{C}^N\times\U$-valued function $X(z,u)$ on $\mathcal{D}(\Omega,Q)$, we denote by $D_X$ the differential operator defined by 
\begin{equation*}
\mathop{D_X}f(z,u):=\left.\frac{d}{dt}\right|_{t=0}f((z,u)+tX(z,u)).
\end{equation*}
Since $f_{c,c'}$ is holomorphic, for any $u_0\in \U$, we have
\begin{equation*}
(D_{(0,u_0)}+i D_{(0,ju_0)}f_{c,c'})(\iu p,0)=0.
\end{equation*}
For a Lie group $H$, let $dR$ denote the representation of $\mathfrak{h}_\mathbb{C}$ on $C^\infty(H)$ defined by
\begin{equation*}
dR(a_1+\iu a_2)f_0(g):=\left.\frac{d}{dt}\right|_{t=0} f_0(ge^{ta_1})+\iu\left.\frac{d}{dt}\right|_{t=0} f_0(ge^{ta_2})\quad(f_0\in C^\infty(H), a_1, a_2\in\mathfrak{h}). 
\end{equation*}
Then it follows from \eqref{mult2gzfgz} that
\begin{equation*}\begin{split}
2&h(c',u_0)=dR(u_0+\iu ju_0)m_{c'}(n(0,0))
\\&=(D_{(0,u_0)}+D_{(0,\iu ju_0)})f_{c,c'}(\iu \reference, 0)f_{c,c'}(\iu \reference,0)^{-1}+dR(u_0+\iu ju_0)m_c(n(0,0))
\\&=dR(u_0+\iu ju_0)m_c(n(0,0))=2h(c,u_0),
\end{split}\end{equation*}
which implies that $c=c'$.

\begin{proof}[Proof of \textup{Theorem \ref{Z8CrYFczzHas}}]
We have
\begin{equation}\begin{split}\label{QEie2dCKcfUd}
m(n&(x_0,u_0),(z,u))
\\&=m\left(n(x_0+2\mathop{\mathrm{Im}} Q(u_0,u),u_0+u\right), (z-\iu Q(u,u),0))
\\&\indent\cdot m(n(0,u),(z-\iu Q(u,u),0))^{-1}
\\&=m(n(0,u_0+u),(z-\iu Q(u,u)+x_0+2\mathop{\mathrm{Im}}Q(u_0,u),0))
\\&\indent\cdot m(n(x_0+2\mathop{\mathrm{Im}}Q(u_0,u),0),(z-\iu Q(u,u),0))
\\&\indent \cdot m(n(0,u),(z-\iu Q(u,u),0))^{-1}
\\&=m'(n(x_0,u_0)(z,u))m'(z,u)^{-1},
\end{split}\end{equation}
where we put
\begin{equation*}
m'(z,u):=m(n(0,u),(z-\iu Q(u,u),0))m(n(x,0),iy-iQ(u,u))\quad(z=x+iy,x,y\in\V).
\end{equation*}

Let $f_m(z,u):=\log m'(z,u)$ be defined on a neighborhood of a point $(x'+iy',u')\in\mathcal{D}(\Omega,Q)$.
In what follows, if necessary, we may take a smaller neighborhood as the domain of definition of $f_m$.
We have
\begin{equation}\label{x8PBtaZmQ2Zt}
f_m(n(x_0,u_0)(z,u))-f_m(z,u)=\log m(n(x_0,u_0),(z,u)),
\end{equation}
and hence
\begin{equation}
\frac{\partial}{\partial \overline{z_1}}f_m(n(x_0,u_0)(z,u))=\frac{\partial}{\partial \overline{z_1}}f_m(z,u).
\end{equation}
There exists $G_1\in C^\infty(\Omega)$ such that $\frac{\partial}{\partial \overline{z_1}}f_m(z,u)=G_1(y-Q(u,u))$, and if we put \begin{equation*}
F_1(y):=-2\iu \int_{(y'-Q(u',u'))_1}^{y_1} G_1(t,y_2,y_3,\cdots,y_{N})\,dt,
\end{equation*}
then it follows that there exists a locally defined smooth function $K_1$ holomorphic in $z_1$ such that
\begin{equation*}
f_m(z,u)=F_1(y-Q(u,u))+K_1(z,u).
\end{equation*}
Then we get inductively that for $k=1,2,\cdots,N$, there exist $F_k\in C^\infty(\Omega)$ and a locally defined smooth function $K_k$ holomorphic in $z_1,z_2,\cdots ,z_k$ such that
\begin{equation*}
f_m(z,u)=F_k(y-Q(u,u))+K_k(z,u).
\end{equation*}
Put $F:=F_N$ and $K:=K_N$.

Now \eqref{x8PBtaZmQ2Zt} reads 
\begin{equation*}
K(n(x_0,u_0)(z,u))-K(z,u)=\log m(n(x_0,u_0),(z,u))\end{equation*}
and since $K$ is holomorphic in $z$, we have
\begin{equation*}
\frac{\partial}{\partial \overline{u_\alpha}}K(n(x_0,u_0)(z,u))=\frac{\partial}{\partial \overline{u_\alpha}}K(z,u)\quad(\alpha=1,2,\cdots, M).
\end{equation*}
Thus for $\alpha=1,2,\cdots, M$, there exists $H_\alpha\in C^\infty(\Omega)$ such that $\frac{\partial}{\partial \overline{u_\alpha}}K(z,u)=H_\alpha(y-Q(u,u))$, and hence $\frac{\partial}{\partial \overline{u_\alpha}}K(z,u)=c_\alpha\in \mathbb{C}$, as $\frac{\partial}{\partial \overline{u_\alpha}}K(z,u)$ is holomorphic in $z$.
It follows that $K(z,u)=h(c,u)+L(z,u)$ with $L$ holomorphic, $c\in\U$.
Hence
\begin{equation}
f_m(z,u)=F(y-Q(u,u))+h(c,u)+L(z,u).
\end{equation}
Since $L$ is holomorphic and $\mathcal{D}(\Omega,Q)$ has a trivial homology, the locally defined functions $L$ and $F$ extend to whole $\mathcal{D}(\Omega,Q)$ in such a way that
\begin{equation*}
e^{F(y-Q(u,u))+L(z,u)+h(c,u)}=m'(z,u).
\end{equation*}
Consequently, by \eqref{QEie2dCKcfUd}, we have 
\begin{equation*}
m(n(x_0,u_0),(z,u))=e^{h(c,u_0)+L(n(x_0,u_0)(z,u))-L(z,u)}.
\end{equation*}
By Lemma \ref{equivalentline}, our assertion follows.
\end{proof}

\section{Representations of the generalized Heisenberg group}\label{u6EAbPUB}
In this section we show Theorem \ref{p9UgSPw2} and classify all irreducible unitary representations of $G$ realized in $\mathcal{O}(\mathcal{D}(\Omega,Q))$.
According to the classification in the previous section, we can focus ourselves on representations having quite simple descriptions.

\subsection{Restriction to the center $Z(G)$}

For a complex manifold $D$, we denote by $\mathcal{O}(D)$ the space of holomorphic functions on $D$.
Let $c\in \U$ and $\pi_c$ a linear representation of $G$ on $\mathcal{O}(\mathcal{D}(\Omega,Q))$ given by
\begin{equation*}
\pi_c(g)f(z,u):=e^{h(c,u_0)}f(g^{-1}(z,u))\quad(f\in\mathcal{O}(\mathcal{D}(\Omega,Q)), g=n(x_0,u_0)\in G).
\end{equation*}
\begin{definition}[cf. \cite{FT99}]\label{def:invariant hilbert}
For a unitary representation $(\pi,\mathcal{H})$ of $G^W$, we say $\pi$ is {\it realized in} $\mathcal{O}(\mathcal{D}(\Omega,Q))$, or $\mathcal{H}$ is a $G^W${\it-invariant Hilbert subspace} of $\mathcal{O}(\mathcal{D}(\Omega,Q))$, given an injective continuous $G^W$-intertwining operator between $\pi$ and $\pi_c$ with respect to the compact-open topology of $\mathcal{O}(\mathcal{D}(\Omega,Q))$.
In this case, we use the terminology `{\it irreducible $G^W$-invariant Hilbert subspace}' when $\pi$ is irreducible.
\end{definition}
We consider the following condition for $\pi_c|_{G^W}$.
\begin{definition}[cf. {\cite{FT99}}, {\cite{Thomas}}]\label{def:MFC}
We say $\pi_c|_{G^W}$ is {\it multiplicity-free} if any two irreducible $G^W$-invariant Hilbert subspaces of $\mathcal{O}(\mathcal{D}(\Omega,Q))$ either coincide as linear spaces, and have proportional inner products, or they yield inequivalent representations of $G^W$.
\end{definition}
\begin{remark}\label{rem:mfc}
According to \cite[Theorem 2]{FT99}, when $c=0$, the notion of multiplicity-freeness in Definition \ref{def:MFC} is equivalent to the following:
\begin{equation*}
\text{Any unitary representation of $G^W$ realized in }\mathcal{O}(\mathcal{D}(\Omega,Q))\text{ is multiplicity-free.}
\end{equation*}
For the definition of multiplicity-free unitary representation, see Definition \ref{def:mfu} below.
\end{remark}
For $\xi\in\mathfrak{g}^*$, we denote by $\pi^\xi$ an irreducible unitary representation of $G$ corresponding to the coadjoint orbit through $\xi$ by the Kirillov-Bernat correspondence.
Suppose that an irreducible unitary representation $(\pi,\mathcal{H})$ of $G$ is realized in $\mathcal{O}(\mathcal{D}(\Omega,Q))$ with $\Phi_0:\mathcal{H}\hookrightarrow\mathcal{O}(\mathcal{D}(\Omega,Q))$ and is equivalent to $\pi^\nu$ with $\nu\in\mathfrak{g}^*$.
Let $K^\mathcal{H}$ be the reproducing kernel defined by $\Phi_0$, and put $K^{\mathcal{H}}_{(\iu p,0)}(z,u):=K^\mathcal{H}(z,u,\iu p,0)$.
For $a=a_1+\iu a_2\in\mathfrak{g}_\mathbb{C}$ with $a_1, a_2\in\mathfrak{g}$ and $\xi\in\mathfrak{g}^*$, we write $\langle\xi, a\rangle=\langle \xi, a_1\rangle+\iu \langle\xi, a_2\rangle$.
Let us denote by $\mathop{\mathrm{Ad}^*}$ the coadjoint representation of $G$.

\begin{proposition}\label{Ksqrtemath}
We have $K_{(\iu \reference,0)}^{\mathcal{H}}(z,u)=e^{-\iu \langle\nu, z\rangle}F(u)$ for some $F\in\mathcal{O}(\U)$.
\end{proposition}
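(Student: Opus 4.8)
The plan is to exploit the fact that $G_1=\{n(x,0)\mid x\in\V\}$ is central in $G$, so that on an irreducible space it must act by a unitary scalar that is pinned down by $\nu$, and to feed this information into the single function $K^{\mathcal{H}}_{(\iu\reference,0)}$.

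First I would record how $G_1$ acts. Since $n(x,0)^{-1}(z,u)=(z-x,u)$ and the multiplier is trivial on $G_1$ (indeed $m_c(n(x,0))=e^{h(c,0)}=1$ for every $c$), the operator $\pi_c(n(x,0))$ is the translation $f\mapsto f(\,\cdot-x,\,\cdot\,)$; as $\mathcal{H}$ is realized in $\mathcal{O}(\mathcal{D}(\Omega,Q))$, the operator $\pi(n(x,0))$ acts on $\mathcal{H}$ in the same way. A short computation with the group law shows that $n(x,0)$ is central in $G$ (the correction term $-2\,\mathrm{Im}\,Q(\cdot,\cdot)$ vanishes when one factor has vanishing $\U$-component), so $\pi(n(x,0))$ commutes with all of $\pi(G)$. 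By Schur's lemma it is a scalar of modulus one, and since $\pi\simeq\pi^\nu$, the Kirillov--Bernat correspondence identifies this scalar with the central character, $\pi(n(x,0))=e^{\iu\langle\nu,(x,0)\rangle}\,\mathrm{Id}$.

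Next I would apply this to $K^{\mathcal{H}}_{(\iu\reference,0)}$, which lies in $\mathcal{H}$ because the reproducing kernel evaluated at a point is a member of the space. Combining the two descriptions of $\pi(n(x,0))$ gives, for every $x\in\V$ and every $(z,u)\in\mathcal{D}(\Omega,Q)$ (real translations in $z$ preserve the domain, since they fix $\mathrm{Im}\,z$),
\[
K^{\mathcal{H}}_{(\iu\reference,0)}(z-x,u)=e^{\iu\langle\nu,(x,0)\rangle}\,K^{\mathcal{H}}_{(\iu\reference,0)}(z,u).
\]
Differentiating along $te_k$ at $t=0$, where $e_1,\dots,e_N$ is the standard basis of $\V$, and using that $K^{\mathcal{H}}_{(\iu\reference,0)}$ is holomorphic in $z$ (so the real directional derivative equals $-\frac{\partial}{\partial z_k}$), I obtain $\frac{\partial}{\partial z_k}K^{\mathcal{H}}_{(\iu\reference,0)}=-\iu\langle\nu,(e_k,0)\rangle\,K^{\mathcal{H}}_{(\iu\reference,0)}$ for each $k$. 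Integrating these equations yields $K^{\mathcal{H}}_{(\iu\reference,0)}(z,u)=e^{-\iu\langle\nu,z\rangle}F(u)$ for some $F$, where $\langle\nu,z\rangle$ denotes the complex-linear pairing evaluated at $(z,0)\in\mathfrak{g}_\mathbb{C}$.

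It then remains to see that $F\in\mathcal{O}(\U)$. From the product form one has $F(u)=e^{\iu\langle\nu,z\rangle}K^{\mathcal{H}}_{(\iu\reference,0)}(z,u)$ for any admissible $z$, and the value is independent of the choice of $z$; since $(\iu(\reference+Q(u_*,u_*)),u_*)\in\mathcal{D}(\Omega,Q)$ for every $u_*\in\U$, a single $z$ works on a neighborhood of $u_*$, where $F$ agrees with a holomorphic function of $u$, so $F$ is entire. The main obstacle I anticipate is the sign bookkeeping in the Kirillov--Bernat correspondence, ensuring the central character comes out as $e^{\iu\langle\nu,(x,0)\rangle}$, together with the (routine but necessary) justification that the real one-parameter relation combines with holomorphy to produce precisely the holomorphic exponent $-\iu\langle\nu,z\rangle$.
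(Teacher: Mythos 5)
Your argument is correct, and it rests on the same pivot as the paper's proof --- namely that the restriction of $\pi\simeq\pi^\nu$ to the central subgroup $G_1=\{n(x,0)\}$ is isotypic for the character $n(x,0)\mapsto e^{\iu\langle\nu,x\rangle}$ attached to $\nu|_{\mathfrak{g}_1}$, which pins down the $z$-dependence --- but the execution is genuinely different and somewhat more elementary. The paper invokes the Corwin--Greenleaf disintegration of $\pi^\nu|_{G_1}$ to get $\pi^\nu|_{G_1}\simeq n(\sigma^{\nu|_{\mathfrak{g}_1}})\,\sigma^{\nu|_{\mathfrak{g}_1}}$, then asserts that every irreducible $G_1$-invariant Hilbert subspace of $\mathcal{O}(\mathcal{D}(\Omega,Q))$ is spanned by a function $e^{-\iu\langle\eta,z\rangle}F(u)$, chooses an orthonormal basis $e_k=e^{-\iu\langle\nu,z\rangle}F_k(u)$ of $\mathcal{H}$ consisting of such eigenfunctions, and reads off the kernel from $K^{\mathcal{H}}(z,u,w,v)=\sum_k e_k(z,u)\overline{e_k(w,v)}$; as a by-product it gets the full two-variable kernel, not just the section at $(\iu\reference,0)$. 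You instead observe that centrality plus Schur's lemma already forces $\pi(n(x,0))$ to be the scalar $e^{\iu\langle\nu,x\rangle}$, transfer this to a translation functional equation for the single element $K^{\mathcal{H}}_{(\iu\reference,0)}$ of the Hilbert subspace, and integrate the resulting first-order system using holomorphy in $z$ on the connected tube $\{z:\mathop{\mathrm{Im}}z-Q(u,u)\in\Omega\}$. This buys you two things: you avoid the Corwin--Greenleaf machinery (reduced here to the standard identification of the central character in the Kirillov--Bernat correspondence, whose sign convention you rightly flag as the only delicate point, and which is exactly the convention the paper uses when it matches $e^{-\iu\langle\eta,z\rangle}F(u)$ with the orbit $\{\eta\}$), and you sidestep both the unproved classification of irreducible $G_1$-invariant subspaces of $\mathcal{O}(\mathcal{D}(\Omega,Q))$ and the convergence bookkeeping in the orthonormal expansion of the kernel. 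The trade-off is that your route yields only the stated section $K^{\mathcal{H}}_{(\iu\reference,0)}$, which is all the proposition claims.
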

\begin{proof}
One has
\begin{equation}\label{leftmathop}
\left(\mathop{\mathrm{Ad}^*(g)}\xi\right)|_{\mathfrak{z(g)}}=\xi|_{\mathfrak{z(g)}}\quad(g\in G).
\end{equation}
From \eqref{leftmathop} and the description of the decomposition of an irreducible unitary representation of a nilpotent Lie group \cite{CG88}, we can write the disintegration of $\pi^\nu|_{Z(G)}$ as
\begin{equation*}
\pi^\nu|_{Z(G)}\simeq\int_{\widehat{Z(G)}}n(\sigma)\sigma\,d\mu(\sigma)\simeq n(\sigma^{\nu})\sigma^{\nu},
\end{equation*}
with $\mu$ a Borel measure on the unitary dual $\widehat{Z(G)}$, $n(\sigma)=0,1,2,\cdots,\infty$, and $\sigma^{\nu}$ a unitary representation of $Z(G)$ corresponding to the orbit $\{\nu|_{\mathfrak{z(g)}}\}\subset\mathfrak{z(g)}^*$. 
On the other hand, the commutativity of $Z(G)$ implies that every irreducible $Z(G)$-invariant Hilbert subspace of $\mathcal{O}(\mathcal{D}(\Omega,Q))$ is one-dimensional and given by the linear span of a function $e^{-\iu \langle \eta, z\rangle}F(u)$ with $\eta\in \left(\V\right)^*$ and $F\in\mathcal{O}(\U)$.
Note that the $Z(G)$-module corresponds to the orbit $\{\eta\}\subset\mathfrak{z(g)}^*$. 
Therefore we can take an orthonormal basis $\{e_k\}_{k=1}^{n(\sigma^\nu)}$ of $\mathcal{H}$ such that $e_k(z,u)=e^{-\iu \langle\nu,z\rangle}F_k(u)$ with $F_k\in\mathcal{O}(\U)$.
Then we have for $(w,v)\in\mathcal{D}(\Omega,Q)$
\begin{equation*}
K^\mathcal{H}(z,u,w,v)=\sum_{k=1}^{n(\sigma^\nu)}e_k(z,u)\overline{e_k(w,v)}=e^{\iu \langle\nu,z-\overline{w}\rangle}\sum_{k=1}^{n(\sigma^\nu)}F_k(u)\overline{F_k(v)},
\end{equation*}
which implies the assertion.
\end{proof}

\subsection{Coherent state representation}
For a unitary representation $(\tau,\mathcal{W})$ of $G$, let us regard the projective space $\mathbb{P}(\mathcal{W})$ as a (possibly infinite-dimensional) K\ua hler manifold and consider the action of $G$ given by $\mathbb{P}(\mathcal{W})\ni[v]\mapsto [\tau(g)v]\in\mathbb{P}(\mathcal{W})\,(g\in G)$.
\begin{definition}[{\cite[Definition 4.2]{Lisiecki95}}]
We call $\tau$ a {\it coherent state representation} (CS representation for short) of $G$ if there exists a $G$-orbit in $\mathbb{P}(\mathcal{W})$ which is a complex submanifold of $\mathbb{P}(\mathcal{W})$ and does not reduce to a point.
\end{definition}
The reproducing kernel $K^\mathcal{H}$ is $G$-invariant, and hence one has
\begin{equation}\label{dpioverlin}
\mathop{d\pi_c(a)} K^\mathcal{H}_{(\iu \reference,0)}=-2h(u,c)K_{(\iu \reference,0)}^\mathcal{H}
\end{equation}
for $a=u-\iu ju$ with $u\in \U$, where we abbreviate $\pi_c|_\mathcal{H}$ to $\pi_c$ and extend the differential representation $d\pi_c$ to a representation of $\mathfrak{g}_\mathbb{C}$ by the complex linearity.
Let $\mathfrak{h}_-$ be the complex subalgebra of $\mathfrak{g}_\mathbb{C}$ given by
\begin{equation*}
\mathfrak{h}_-:=\mathbb{C}^N\oplus\{u+\iu ju\mid u\in{\U}\}\subset\mathfrak{g}_\mathbb{C}.
\end{equation*}
Put
\begin{equation*}
\tilde{\nu}(x,u):=\langle\nu, x\rangle+2\mathop{\mathrm{Im}}h(c,u)\quad(x\in \V, u\in \U).
\end{equation*}
By \eqref{dpioverlin} and Proposition \ref{Ksqrtemath}, we see that $K_{(\iu \reference,0)}^\mathcal{H}$ solves the following equation:
\begin{equation}\label{dpiKmathca}
\mathop{d \pi_c(a)}f=\iu \langle\tilde{\nu},a\rangle f\quad(a\in\overline{\mathfrak{h}_-},f\in\mathcal{H}),
\end{equation}
where the bar denotes the complex conjugation of $\mathfrak{g}_\mathbb{C}$ with respect to the real form $\mathfrak{g}$.
By \cite[2., Proposition]{Lisiecki90}, this implies that $\pi$ is a coherent state representation of $G$, and the linear form $\xi=-\tilde{\nu}$ satisfies
\begin{equation}\label{tildenuove}
-\iu \langle \xi,[a,\overline{a}]\rangle=8\langle\xi,Q(u,u)\rangle\geq 0
\end{equation}
for all $a=u+\iu ju$ with $u\in\U$.
Here we note that the moment map $\mu_\pi:\mathbb{P}(\mathcal{H}^\infty)\rightarrow\mathfrak{g}^*$ defined by
\begin{equation*}
\langle \mu_{\pi}([\psi]),x\rangle=\frac{1}{i}\frac{(d\pi(x)\psi,\psi)_\mathcal{H}}{(\psi,\psi)_\mathcal{H}}\quad(x\in\mathfrak{g},\psi\in\mathcal{H}^\infty)
\end{equation*}
satisfies
\begin{equation*}
\langle \mu_{\pi}([K_{(ip,0)}^\mathcal{H}]),a\rangle=\langle\widetilde{\nu},a\rangle
\end{equation*}
for all $a\in\overline{\mathfrak{h}_-}$.
Hence we have $\mu_\pi([K_{(ip,0)}^\mathcal{H}])=\widetilde{\nu}$.
We have the following theorem.
\begin{theorem}\label{ZHwubsUKw23T}
Any infinite dimensional irreducible unitary representation of $G$ realized in $\mathcal{O}(\mathcal{D}(\Omega,Q))$ is a CS representation.
\end{theorem}

\subsection{Multiplicity-freeness}
Now we show that $f(z,u)=e^{-\iu \langle \nu,z\rangle}e^{h(u,c)}$ is only the solution of \eqref{dpiKmathca} up to a constant.
First, for $a=u_0-\iu ju_0$ with $u_0\in{\U}$, the equation \eqref{dpiKmathca} tells us that
\begin{equation*}\begin{split}
\mathop{\left(D_{(-2\iu Q(u,u_0),-u_0)}-\iu D_{(-2\iu Q(u,ju_0),-ju_0)}\right)}f(z,u)=-2h(u_0,c)f(z,u).
\end{split}\end{equation*}
Since $f$ is holomorphic in $z$ and $Q$ is Hermitian, it follows that
\begin{equation*}
\mathop{\left(D_{(0,u_0)}-\iu D_{(0,ju_0)}\right)}f(z,u)=2h(u_0,c)f(z,u),
\end{equation*}
which implies that there exists $F\in\mathcal{O}(\mathcal{D}(\Omega))$ such that 
\begin{equation}\label{fzuFzzuinm}
f(z,u)=F(z)e^{h(u,c)}.
\end{equation}
Next, for $a=x_0\in \V$, the equation \eqref{dpiKmathca} can be read as
\begin{equation*}
\mathop{D_{(-x_0,0)}}f(z,u)=\iu \langle \nu, x_0\rangle f(z,u),
\end{equation*}
from which we can see that 
\begin{equation}\label{fzxuesqrtl}
f(z+x_0,u)=e^{-\iu \langle \nu,x_0\rangle}f(z,u).
\end{equation}
Combining \eqref{fzuFzzuinm} with \eqref{fzxuesqrtl}, we have
\begin{equation*}\begin{split}
f(z,u)&=F(z)e^{h(u,c)}=F(\iu \reference+z-\iu \reference)e^{h(u,c)}
\\&=F(\iu \reference)e^{-\iu \langle \nu,z-\iu \reference\rangle}e^{h(u,c)},
\end{split}\end{equation*}
by the analytic continuation.
Thus $f(z,u)=e^{-\iu \langle \nu,z\rangle}e^{h(u,c)}$ is only the solution of \eqref{dpiKmathca} up to a constant.
We obtain the following theorem.
\begin{theorem}\label{tyjqndhg5v55}
If an irreducible unitary representation $(\pi,\mathcal{H})$ of $G$ corresponding to $\nu\in\mathfrak{g}^*$ is realized in $\mathcal{O}(\mathcal{D}(\Omega,Q))$, then 
the reproducing kernel satisfies
\begin{equation*}
K_{(ip,0)}^\mathcal{H}(z,u)=e^{-i\langle \nu,z\rangle}e^{h(u,c)}
\end{equation*}
up to a constant.
\end{theorem}
As a corollary of Theorem \ref{tyjqndhg5v55}, we now show Theorem \ref{p9UgSPw2}.
\begin{proof}[Proof of \textup{Theorem \ref{p9UgSPw2}}]
Let $\Phi:\mathcal{H}\hookrightarrow \mathcal{O}(\mathcal{D}(\Omega,Q))$ be an arbitrary injective continuous $G$-intertwining operator between $\pi$ and $\pi_c$.
Then $\Phi\Phi_0^{-1} K_{(\iu p,0)}^\mathcal{H}$ solves \eqref{dpiKmathca}, and hence coincides with $K_{(\iu p,0)}^\mathcal{H}$ up to a scalar multiplication by Theorem \ref{tyjqndhg5v55}.
By the $G$-invariance and the reproducing property of the kernel function, we see that any two equivalent irreducible $G$-invariant Hilbert subspaces coincide as linear spaces and have proportional inner products.
This completes the proof.
\end{proof}

\subsection{Classification of irreducible invariant Hilbert subspaces}
Let $\xi\in\mathfrak{g}^*$ be satisfying $\xi|_{\U}=0$, which may be expressed as $\xi\in(\V)^*$, and \eqref{tildenuove}.
Put
\begin{equation*}
N_\xi:=\{u\in \U|\,\langle \xi,Q(u,u)\rangle=0\}\quad\text{and}\quad N_\xi^\perp:=\{u\in \U\mid h(u,N_\xi)=0\}.
\end{equation*}
For a finite dimensional real vector space $V$, we identify naturally $V$ with the Euclidean space of the same dimension, and let $d\lambda_V$ denote the pushforward measure of the Lebesgue measure.
Put 
\begin{equation*}
\mathcal{F}_\xi:=\left\{F\in\mathcal{O}({\U})\left|\,\begin{array}{c}F(u+v)=F(u)\makebox{\quad for all }u\in \U\makebox{ and }v\in N_\xi,\\ \int_{{N_\xi}^{\perp}}|F(u)|^2e^{-2\langle\xi, Q(u,u)\rangle}\,d\lm_\xi(u)<\infty\end{array}\right.\right\},
\end{equation*}
where $d\lm_\xi:=d\lambda_{N_\xi^\perp}$ is normalized so that $\int_{N_\xi^\perp}e^{-2\langle\xi,Q(u,u)\rangle}d\lm_\xi(u)=1$.
For $\cc\in\U$, let $V_{\xi,\cc}$ be the representation of $G$ on $\mathcal{F}_\xi$ defined by
\begin{eqnarray*}
\mathop{V_{\xi,\cc}(n(x_0,u_0))}F(u)
\\:=e^{-\iu \langle \xi,x_0\rangle}e^{2\iu \mathop{\mathrm{Im}}h(\cc,u_0)}e^{-\langle \xi,Q(u_0,u_0)\rangle}e^{2\langle \xi,Q(u,u_0)\rangle}F(u-u_0)
\\(x_0\in{\V},u_0\in{\U}).
\end{eqnarray*}
\begin{proposition}\label{Thecorresp}
The operator $\Phi_\xi$ intertwines $V_{\xi,c}$ with $\pi_c$.
\end{proposition}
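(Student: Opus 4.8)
The plan is to verify the intertwining identity $\Phi_\xi\circ V_{\xi,c}(g)=\pi_c(g)\circ\Phi_\xi$ by a direct computation, reduced to a generating set of $G$. Since $n(x,u)=n(x,0)\,n(0,u)$ for all $x\in\V$ and $u\in\U$, the elements $n(x,0)$ and $n(0,u_0)$ generate $G$; and as both $V_{\xi,c}$ and $\pi_c$ are representations, the set of $g\in G$ for which the identity holds is a subgroup. Hence it suffices to treat the two families $n(x,0)\,(x\in\V)$ and $n(0,u_0)\,(u_0\in\U)$ separately. First I would note that $\Phi_\xi$ really does map $\mathcal{F}_\xi$ into $\mathcal{O}(\mathcal{D}(\Omega,Q))$: the factor $e^{h(u,c)}$ is $\mathbb{C}$-linear in $u$ for $c$ fixed, and $e^{\langle\xi,\iu z\rangle}=e^{\iu\langle\xi,z\rangle}$ is holomorphic in $z$, so $\Phi_\xi F$ is holomorphic.

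For $g=n(x,0)$ the computation is short: $\pi_c(n(x,0))$ acts on $\mathcal{O}(\mathcal{D}(\Omega,Q))$ by $f(z,u)\mapsto f(z-x,u)$, so $\pi_c(n(x,0))(\Phi_\xi F)(z,u)=e^{h(u,c)}e^{\iu\langle\xi,z-x\rangle}F(u)$, whereas $V_{\xi,c}(n(x,0))F=e^{-\iu\langle\xi,x\rangle}F$ gives $\Phi_\xi(V_{\xi,c}(n(x,0))F)(z,u)=e^{h(u,c)}e^{\iu\langle\xi,z\rangle}e^{-\iu\langle\xi,x\rangle}F(u)$. These agree because $x\in\V$ is real and $\langle\xi,\cdot\rangle$ is $\mathbb{C}$-linear, so $\langle\xi,z-x\rangle=\langle\xi,z\rangle-\langle\xi,x\rangle$.

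The substantive case is $g=n(0,u_0)$. Here I would expand $\pi_c(n(0,u_0))(\Phi_\xi F)(z,u)=e^{h(c,u_0)}\,\Phi_\xi F(z',u')$, where $(z',u')=(z-2\iu Q(u,u_0)+\iu Q(u_0,u_0),\,u-u_0)$ is the image under the explicit affine action of $n(0,u_0)^{-1}$. Three elementary facts then finish it: linearity of $h$ gives $h(u-u_0,c)=h(u,c)-h(u_0,c)$; the $\mathbb{C}$-linearity of $\xi$ together with $Q(u_0,u_0)\in\overline{\Omega}\subset\V$ being real gives $\iu\langle\xi,-2\iu Q(u,u_0)+\iu Q(u_0,u_0)\rangle=2\langle\xi,Q(u,u_0)\rangle-\langle\xi,Q(u_0,u_0)\rangle$; and the Hermitian symmetry $h(u_0,c)=\overline{h(c,u_0)}$ gives $h(c,u_0)-h(u_0,c)=2\iu\operatorname{Im}h(c,u_0)$. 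Collecting the exponents, the factor produced by $\pi_c$ becomes exactly $e^{h(u,c)}e^{\iu\langle\xi,z\rangle}\cdot e^{2\iu\operatorname{Im}h(c,u_0)}e^{-\langle\xi,Q(u_0,u_0)\rangle}e^{2\langle\xi,Q(u,u_0)\rangle}F(u-u_0)$, which is precisely $\Phi_\xi(V_{\xi,c}(n(0,u_0))F)(z,u)$.

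I expect no deep obstacle: the statement reduces to a finite verification. The only real care is the bookkeeping in the case $n(0,u_0)$—in particular distinguishing the genuinely complex quantity $\langle\xi,Q(u,u_0)\rangle$ from the real quantity $\langle\xi,Q(u_0,u_0)\rangle$, and tracking how the $\mathbb{C}$-linear extension of $\xi$ converts the factors $-2\iu Q(u,u_0)$ and $\iu Q(u_0,u_0)$ coming from the affine action into the real exponents $2\langle\xi,Q(u,u_0)\rangle$ and $-\langle\xi,Q(u_0,u_0)\rangle$ in the definition of $V_{\xi,c}$. Matching the unimodular factor $e^{2\iu\operatorname{Im}h(c,u_0)}$ against $e^{h(c,u_0)-h(u_0,c)}$ via the Hermitian symmetry of $h$ is the final point to confirm.
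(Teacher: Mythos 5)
Your proposal is correct and follows essentially the same route as the paper: the paper's proof likewise verifies the identity separately on the two generating families $n(x_0,0)=\exp x_0$ and $n(0,u_0)=\exp u_0$ by the same direct computation, using $n(0,u_0)^{-1}(z,u)=(z-2\iu Q(u,u_0)+\iu Q(u_0,u_0),u-u_0)$ and the identity $h(c,u_0)-h(u_0,c)=2\iu\operatorname{Im}h(c,u_0)$. The bookkeeping you describe matches the paper's line-by-line.
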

\begin{proof}
We have
\begin{equation*}\begin{split}
\pi_c(\exp x_0)\mathop{\Phi_\xi} F(z,u)&=\mathop{\Phi_\xi} F(z-x_0,u)=e^{h(u,c)}e^{\langle\xi,\iu (z-x_0)\rangle}F(u)
\\&=e^{h(u,c)}e^{\langle \xi, \iu z\rangle}\mathop{V_{\xi,c}(\exp x_0)}F(u)=\mathop{\Phi_\xi} \mathop{V_{\xi,c}(\exp x_0)}F(z,u).
\end{split}\end{equation*}
Also, we have
\begin{equation*}\begin{split}
\mathop{\pi_c(\exp u_0)}&=e^{h(u-u_0,c)}e^{h(c,u_0)}e^{\langle\xi,\iu z+2Q(u,u_0)-Q(u_0,u_0)\rangle}F(u-u_0)
\\&=e^{h(u,c)}e^{\langle\xi,\iu z\rangle}\mathop{V_{\xi,c}(\exp u_0)}F(u)=\mathop{\Phi_\xi} \mathop{V_{\xi,c}(\exp u_0)}F(z,u).
\end{split}\end{equation*}
This completes the proof.
\end{proof}

Let ${X_{\xi,s}}\in\mathfrak{g}^*$ be given by
\begin{equation*}
{X_{\xi,s}}(x,u):=-\langle\xi,x\rangle+2\mathop{\mathrm{Im}}h(s,u).
\end{equation*}
For $u\in\U$, let $u_\xi\in N_\xi$, and $u^\xi\in N_\xi^\perp$ be the orthogonal projections of $u$ on $N_\xi$ and $N_\xi^\perp$, respectively.
\begin{lemma}\label{lem:induction}
The image of the coadjoint orbit $\Ad^*(G)X_{\xi,s}\subset\mathfrak{g}^*$ under the Kirillov-Bernat map is the unitary equivalence class of $V_{\xi,s_\xi}$.
\end{lemma}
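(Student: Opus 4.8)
The plan is to apply Kirillov--Bernat theory directly to the two-step nilpotent group $G$. First I would record the bracket of $\mathfrak g=\V\oplus\U$: from $n(x_0,u_0)n(x_1,u_1)=n(x_0+x_1-2\mathop{\mathrm{Im}}Q(u_1,u_0),u_0+u_1)$ one reads $[(x,u),(x',u')]=4\mathop{\mathrm{Im}}Q(u,u')\in\V$, so $\V$ is central. For $\ell:=X_{\xi,c}$ the form $B_\ell(Y,Y'):=\langle\ell,[Y,Y']\rangle=-4\mathop{\mathrm{Im}}\langle\xi,Q(u,u')\rangle$ depends only on $\xi|_{\V}$; since the Hermitian form $(u,u')\mapsto\langle\xi,Q(u,u')\rangle$ has kernel exactly $N_\xi$ and is positive definite on $N_\xi^{\perp}$ by \eqref{tildenuove}, its imaginary part is symplectic on $N_\xi^{\perp}$. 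Hence the radical of $B_\ell$ is $\mathfrak r:=\V\oplus N_\xi$, the stabilizer of $\ell$ is $\exp\mathfrak r$, and the coadjoint orbit is the flat affine subspace $G X_{\xi,c}=X_{\xi,c}+\mathfrak r^{\perp}$. In particular $G X_{\xi,c}$ is pinned down by $X_{\xi,c}|_{\mathfrak r}$, namely by $X_{\xi,c}|_{\V}=-\xi$ and by $X_{\xi,c}(0,u_0)=2\mathop{\mathrm{Im}}h(c_\xi,u_0)$ for $u_0\in N_\xi$, the $N_\xi^{\perp}$-part of $c$ dropping out because $h(c^{\xi},u_0)=0$ there.

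Next I would identify $V_{\xi,c_\xi}$ as the Kirillov--Bernat representation of this orbit by exhibiting the standard holomorphic (Bargmann--Fock) model. Differentiating the defining formulas of $V_{\xi,c_\xi}$ and extending $\iu$-linearly, a short computation shows that the constant function $\mathbf 1\in\mathcal F_\xi$ is a joint eigenvector of the complex subalgebra $\mathfrak h:=\mathbb C^N\oplus(N_\xi)_{\mathbb C}\oplus\{u-\iu ju\mid u\in N_\xi^{\perp}\}$, with $dV_{\xi,c_\xi}(a)\mathbf 1=\iu\langle X_{\xi,c_\xi},a\rangle\mathbf 1$; concretely $dV_{\xi,c_\xi}(u_0-\iu ju_0)\mathbf 1=-2h(u_0,c_\xi)\mathbf 1$ and $dV_{\xi,c_\xi}(x_0)\mathbf 1=-\iu\langle\xi,x_0\rangle\mathbf 1$. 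This $\mathfrak h$ is the positive polarization adapted to the model: the directions $u\mp\iu ju$ bracket to zero, exactly as computed for $\mathfrak h_-$ in \eqref{iW6jkXGC}, and the $N_\xi$-directions lie in the radical $\mathfrak r$, so $\mathfrak h$ is a complex subalgebra subordinate to $\ell$ of the required dimension $N+M+\dim_{\mathbb C}N_\xi$, while positivity $-\iu\langle\ell,[a,\bar a]\rangle=8\langle\xi,Q(u,u)\rangle\ge 0$ for $a=u-\iu ju$ is precisely \eqref{tildenuove}.

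With this in place the identification is immediate. The space $\mathcal F_\xi$, with inner product normalized by $\int_{N_\xi^{\perp}}e^{-2\langle\xi,Q(u,u)\rangle}\,d\lm_\xi=1$, is the Bargmann--Fock completion attached to $\mathfrak h$; $V_{\xi,c_\xi}$ is unitary and irreducible on it because, once $\exp\mathfrak r$ is divided out (it acts by the scalar character $\exp Y\mapsto e^{\iu\langle X_{\xi,c},Y\rangle}$, using $X_{\xi,c}|_{\mathfrak r}=X_{\xi,c_\xi}|_{\mathfrak r}$), what remains is the irreducible Fock representation of the Heisenberg group over the symplectic space $N_\xi^{\perp}$. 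Being irreducible and unitary, $V_{\xi,c_\xi}$ corresponds under Kirillov--Bernat to a single coadjoint orbit, on whose connected stabilizer it acts by the character that the orbit determines; since that character is $e^{\iu\langle X_{\xi,c},\cdot\rangle}$ on $\mathfrak r$ and the orbit is the flat $X_{\xi,c}+\mathfrak r^{\perp}$ determined by exactly this datum, the orbit must be $G X_{\xi,c}$. Alternatively, one can read the equivalence class off the intertwiner $\Phi_\xi$ of Proposition~\ref{Thecorresp}, which realizes $V_{\xi,c}$ inside $\mathcal O(\mathcal D(\Omega,Q))$.

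The main obstacle is analytic rather than orbit-theoretic: one must check that the cocycle $e^{-\langle\xi,Q(u_0,u_0)\rangle}e^{2\langle\xi,Q(\cdot,u_0)\rangle}$ acts unitarily on $\mathcal F_\xi$ and that $V_{\xi,c_\xi}$ is irreducible, a Bargmann Gaussian-integral computation that the normalization of $d\lm_\xi$ is designed to make work and in which only the $N_\xi^{\perp}$-directions carry the oscillator structure. A secondary point to handle with care is exactly the appearance of $c_\xi$ in place of $c$: the component $c^{\xi}\in N_\xi^{\perp}$ only shifts $X_{\xi,c}$ within the single flat $X_{\xi,c}+\mathfrak r^{\perp}$ and is a coboundary for the multiplier, so $V_{\xi,c}\cong V_{\xi,c_\xi}$ and both lie over the one orbit; I would make this explicit by a gauge transformation $F\mapsto e^{\psi}F$ on $\mathcal F_\xi$. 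Finally, the sign and normalization conventions of the Kirillov--Bernat map have to be matched to those already fixed in the paper (the functional $\tilde\nu=-\xi$ and equation \eqref{dpiKmathca}), which is routine bookkeeping.
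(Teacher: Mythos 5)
Your argument is correct in outline but takes a genuinely different route from the paper. The paper proves the lemma by reconstructing $V_{\xi,c_\xi}$ through Auslander--Kostant holomorphic induction: it introduces the positive polarization $\mathfrak{p}=\mathbb{C}^N\oplus(N_\xi)_\mathbb{C}\oplus\{u+\iu ju\mid u\in N_\xi^\perp\}$ at $X_{\xi,c}$ (satisfying the Pukanszky condition), forms the holomorphically induced representation $\rho(X_{\xi,c},\mathfrak{p},G)$ on a space of functions on $G$, writes down an explicit unitary intertwiner $\Psi_{\xi,c}\phi(u)=e^{\langle\xi,Q(u^\xi,u^\xi)\rangle}\phi(n(0,u^\xi))$ onto $(\mathcal{F}_\xi,V_{\xi,c_\xi})$, and then cites Fujiwara \cite{Fujiwara74} to conclude that this induced representation lies over the orbit $GX_{\xi,c}$; unitarity and irreducibility of $V_{\xi,c_\xi}$ thus come for free. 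You instead compute the orbit directly as the flat $X_{\xi,c}+\mathfrak{r}^\perp$ with $\mathfrak{r}=\V\oplus N_\xi$ the radical of $B_{X_{\xi,c}}$, check that $\exp\mathfrak{r}$ acts in $V_{\xi,c_\xi}$ by the character $e^{\iu\langle X_{\xi,c},\cdot\rangle}$ (which also explains why $c^\xi$ drops out), and invoke the restriction-to-normal-subgroups principle of Corwin--Greenleaf \cite{CG88} --- already used elsewhere in the paper --- to pin the orbit down as the unique one lying over that character. Both identifications of the orbit are sound; the trade-off is that your route must establish unitarity and irreducibility of $V_{\xi,c_\xi}$ by hand (the Stone--von Neumann/Fock argument you flag as the main remaining work), whereas the paper outsources exactly this to Fujiwara's theorem at the cost of setting up the holomorphically induced model. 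Two small points to fix in a write-up: your polarization uses $u-\iu ju$ while \eqref{tildenuove} is stated for $a=u+\iu ju$, and $[a,\overline{a}]$ changes sign under $a\leftrightarrow\overline{a}$, so the sign in your positivity check must be adjusted to match the paper's convention (your eigenvalue computation $dV_{\xi,c_\xi}(u_0-\iu ju_0)\mathbf{1}=-2h(u_0,c_\xi)\mathbf{1}$ is nevertheless correct and is consistent with \eqref{dpiKmathca} and \eqref{iW6jkXGC}); and the passage from ``$\pi|_{\exp\mathfrak{r}}$ is a multiple of a single character'' to ``the projection of the orbit to $\mathfrak{r}^*$ is a point'' should be stated as an application of \cite{CG88} rather than left implicit.
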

\begin{proof}
First we reconstruct $V_{\xi,s_\xi}$ via the Auslander-Kostant theory.
Let
\begin{equation*}
\mathfrak{p}:=\mathbb{C}^N\oplus\left(N_\xi\right)_\mathbb{C}\oplus\{u+\iu ju\mid u\in N_\xi^\perp\},\quad\mathfrak{d}:=\mathfrak{p}\cap \mathfrak{g}, \quad\text{and}\quad D:=Z(G)\exp(N_\xi).
\end{equation*}
Then $\mathfrak{p}$ is a positive polarization of $\mathfrak{g}$ at ${X_{\xi,s}}\in\mathfrak{g}^*$ satisfying the Pukanszky condition.
Let $\mathcal{H}({X_{\xi,s}},\mathfrak{p},G)$ be the space of of smooth functions $\phi$ on $G$ satisfying
\begin{eqnarray}
\phi(g\exp b)=e^{-\iu\langle{X_{\xi,s}}, b\rangle}\phi(g)\quad(g\in G, b\in \mathfrak{d}),
\\
\int_{G/D} |\phi|^2\,d\mu_{G/D}<\infty,
\end{eqnarray}
and
\begin{eqnarray}
\mathop{dR(q)}\phi=-\iu \langle {X_{\xi,s}}, q\rangle \phi\quad(q\in\mathfrak{p}),
\end{eqnarray}
where $\mu_{G/D}$ denotes a nonzero $G$-invariant measure on $G/D$.
The holomorphic induced representation $\rho=\rho({X_{\xi,s}},\mathfrak{p},G)$ is defined by
\begin{equation*}
\mathop{\rho(g)}\phi(g_0):=\phi(g^{-1}g_0)\quad(\phi\in \mathcal{H}({X_{\xi,s}},\mathfrak{p},G), g,g_0\in G).
\end{equation*}
Let $\Psi_{\xi,s}:\mathcal{H}({X_{\xi,s}},\mathfrak{p},G)\rightarrow\mathcal{O}(\U)$ be given by
\begin{equation*}
\mathop{\Psi_{\xi,s}} \phi(u)=e^{\langle\xi,Q(u^\xi,u^\xi)\rangle}\phi(n(0,u^\xi))\quad(u\in \U).
\end{equation*}
Then $\Psi_{\xi,s}$ gives a $G$-intertwining operator from $\mathcal{H}({X_{\xi,s}},\mathfrak{p},G)$ onto $\mathcal{F}_{\xi}$.
The unitary equivalence class of the representation $\rho({X_{\xi,s}}, \mathfrak{p}, G)$ is mapped by the Kirillov-Bernat map into the orbit $G {X_{\xi,s}}$ by Fujiwara \cite{Fujiwara74}, and hence the assertion follows.
\end{proof}
The reproducing kernel $K^\mathcal{H}$, which is partially determined in Theorem \ref{tyjqndhg5v55}, coincides with the reproducing kernel defined by $\Phi_\xi$ with $\xi|_{\mathfrak{z}(\mathfrak{g})}=-\nu|_{\mathfrak{z}(\mathfrak{g})}$, up to a constant.
Thus if we put
\begin{equation*}
P:=\{\xi\in\mathfrak{g}^*\mid\xi\text{ satisfies }\xi|_{\U}=0\text{ and }\eqref{tildenuove}\},
\end{equation*}
then we have the following.
\begin{corollary}
$\{V_{\xi,c}\}_{\xi\in P}$ is a set of complete representatives of the equivalence classes of irreducible unitary representations of $G$ realized in $\mathcal{O}(\mathcal{D}(\Omega,Q))$.
\end{corollary}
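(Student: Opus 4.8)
The plan is to establish three things: that each $V_{\xi,c}$ with $\xi\in P$ is an irreducible unitary representation realized in $\mathcal{O}(\mathcal{D}(\Omega,Q))$; that conversely every irreducible unitary representation so realized is equivalent to some member of the family; and that distinct $\xi\in P$ yield inequivalent representations, so that the family is a set of representatives without repetition.

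First I would check that $(V_{\xi,c},\mathcal{F}_\xi)$ is unitary and irreducible. Unitarity is a direct computation: equipping $\mathcal{F}_\xi$ with the Gaussian-weighted inner product $\langle F,F'\rangle=\int_{N_\xi^\perp}F(u)\overline{F'(u)}\,e^{-2\langle\xi,Q(u,u)\rangle}\,d\lambda_\xi(u)$, the Hermitian symmetry of $Q$ together with the normalization of $d\lambda_\xi$ shows that $V_{\xi,c}(n(x,0))$ and $V_{\xi,c}(n(0,u_0))$ preserve both the defining $N_\xi$-invariance and this norm, the factor $e^{2\iu\,\mathop{\mathrm{Im}}h(c,u_0)}$ contributing only a phase. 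Irreducibility is supplied by the Lemma preceding the statement, which identifies $V_{\xi,c}$ with the representation attached by the Auslander--Kostant construction to the polarization $\mathfrak{p}$ at ${X_{\xi,c}}$ and shows that under the Kirillov--Bernat map it corresponds to the single coadjoint orbit $G{X_{\xi,c}}$; since that map carries orbits to irreducible classes, $V_{\xi,c}$ is irreducible. That $V_{\xi,c}$ is realized in $\mathcal{O}(\mathcal{D}(\Omega,Q))$ is then Proposition \ref{Thecorresp}, once one notes that $\Phi_\xi$ is injective (it multiplies $F(u)$ by the nowhere-vanishing $e^{h(u,c)+\langle\xi,\iu z\rangle}$) and continuous for the compact-open topology.

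For the converse, let $(\pi,\mathcal{H})$ be irreducible unitary and realized in $\mathcal{O}(\mathcal{D}(\Omega,Q))$, and identify $\mathcal{H}$ with the corresponding irreducible $G$-invariant Hilbert subspace. If $\pi$ is infinite-dimensional, Theorem \ref{ZHwubsUKw23T} produces $\tilde\nu=-\xi$ satisfying \eqref{tildenuove}, whose restriction to $\mathfrak{g}_1$ lies in $P$, and its coherent vector $K^\mathcal{H}_{(\iu\reference,0)}$ solves \eqref{dpiKmathca}. By Theorem \ref{tyjqndhg5v55} this vector is forced to equal $e^{-\iu\langle\nu,z\rangle}e^{h(u,c)}$ up to a scalar, which is precisely $\Phi_\xi(1)$, the image of the constant function. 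Because an irreducible $G$-invariant Hilbert subspace is the closed $\pi_c$-span of its coherent vector, which is cyclic, both $\mathcal{H}$ and $\Phi_\xi(\mathcal{F}_\xi)$ are the closed $\pi_c$-span of the same function and therefore coincide as subspaces of $\mathcal{O}(\mathcal{D}(\Omega,Q))$; by Schur's lemma their inner products are proportional, so $\pi\simeq V_{\xi,c}$. The finite-dimensional irreducibles are one-dimensional, as $G$ is connected nilpotent, and are recovered as the $V_{\xi,c}$ with $N_\xi=\U$, for which $\mathcal{F}_\xi$ is the line of constants; a direct check shows that a one-dimensional invariant subspace $\mathbb{C}e^{-\iu\langle\nu,z\rangle}e^{h(u,c)}$ is $G$-stable exactly when $\langle\nu,Q(u,u_0)\rangle=0$ for all $u,u_0$, that is, $N_\xi=\U$.

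Distinctness will follow from \eqref{leftmathop}: the restriction ${X_{\xi,c}}|_{\mathfrak{g}_1}=-\xi$ is constant along the coadjoint orbit, so distinct $\xi\in P$ label distinct orbits and hence inequivalent classes under Kirillov--Bernat. The main obstacle is the matching step in the converse. Since $G$ is not transitive on $\mathcal{D}(\Omega,Q)$ but has orbits indexed by $\reference\in\Omega$, Theorem \ref{tyjqndhg5v55} controls only the single coherent vector $K^\mathcal{H}_{(\iu\reference,0)}$; the delicate point is to argue that this vector is cyclic and that its uniqueness up to scale genuinely pins down the whole invariant Hilbert subspace $\mathcal{H}$, and then to verify that the parameter and normalization emerging from Theorem \ref{ZHwubsUKw23T} coincide with those of the Gaussian weight defining $\mathcal{F}_\xi$.
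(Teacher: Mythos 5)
Your proposal is correct and follows essentially the same route as the paper, which derives the corollary in one line by combining Theorem \ref{ZHwubsUKw23T} (every infinite-dimensional irreducible representation realized in $\mathcal{O}(\mathcal{D}(\Omega,Q))$ has a coherent vector solving \eqref{dpiKmathca}), Theorem \ref{tyjqndhg5v55} (uniqueness of that vector up to scalar), and Proposition \ref{Thecorresp} (realization of $V_{\xi,c}$ via $\Phi_\xi$). You additionally supply details the paper leaves implicit --- unitarity and irreducibility of $V_{\xi,c}$ via the Kirillov--Bernat lemma, the one-dimensional constituents (which Theorem \ref{ZHwubsUKw23T} does not cover), and inequivalence for distinct $\xi\in P$ from the restriction to $\mathfrak{g}_1$ --- and these are sound.
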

We shall complete the classification of $V_{\xi,s}\,(\xi\in P,s\in\U)$.
\begin{proposition}\label{W2deVQNG}
For $s, t\in \U$, the representations $V_{\xi,s}$ and $V_{\xi,t}$ of $G$ are unitarily equivalent if and only if $s-t\in N_\xi^{\perp}$.
When this condition is satisfied, the following operator $\psi_{\cc,t}$ intertwines $(V_{\xi, \cc},\mathcal{F}_\xi)$ with $(V_{\xi,t},\mathcal{F}_\xi)$\textup{:}
\begin{equation*}
\mathop{\psi_{s,t}} F(u):=\int_{N_\xi^\perp} F(v)e^{2\iu \mathop{\mathrm{Im}}h(t-\cc,v)}e^{2\langle \xi, Q(u,v)\rangle}e^{-2\langle \xi, Q(v,v)\rangle}\,d\lm_\xi(v).
\end{equation*}
\end{proposition}
\begin{proof}
For $u,w\in \U$ and $F\in \mathcal{F}_\xi$, we have
\begin{equation*}\begin{split}
\mathop{\psi_{\cc,t}}&\mathop{V_{\xi,\cc}(n(0,w))}F(u)
\\&=e^{2\iu \mathop{\mathrm{Im}}h(\cc,w)}e^{-\langle \xi, Q(w,w)\rangle}
\\&\quad\cdot\int_{N_\xi^\perp}F(v-w)e^{2\iu \mathop{\mathrm{Im}}h(t-\cc,v)+2\langle \xi,Q(v,w)\rangle+2\langle\xi,Q(u,v)\rangle-2\langle\xi,Q(v,v)\rangle}\,d\lm_\xi(v)
\\&=e^{2\iu \mathop{\mathrm{Im}}h(t-s,w^\xi)+2\iu \mathop{\mathrm{Im}}h(s,w)+2\langle\xi,Q(u,w)\rangle-\langle\xi,Q(w,w)\rangle}
\\&\quad\cdot \int_{N_\xi^\perp}F(v)e^{2\iu \mathop{\mathrm{Im}}h(t-\cc,v)+2\langle\xi, Q(u-w,v)\rangle-2\langle\xi,Q(v,v)\rangle}\,d\lm_\xi(v)
\\&=e^{2\iu \mathop{\mathrm{Im}}h(s-t,{w}_\xi)}\mathop{V_{\xi,t}(n(0,w))}\mathop{\psi_{\cc,t}} F(u).
\end{split}
\end{equation*}
This shows the ``if'' part of the statement.
Conversely, suppose that $V_{\xi,s}\simeq V_{\xi,t}$.
By the ``if'' part of the statement and Lemma \ref{lem:induction}, we see that the unitary equivalence class of $V_{\xi,s}$ corresponds to $\Ad^*(G)X_{\xi,s_\xi}\subset \mathfrak{g}^*$.
From the equality $\Ad^*(G)X_{\xi,s_\xi}=\Ad^*(G)X_{\xi,t_\xi}$ we see that there exists $u\in\U$ such that
\begin{equation*}
\mathop{\mathrm{Im}} h((s-t)_\xi, v)=\langle \xi, [u,v]\rangle\quad(v\in\U).
\end{equation*}
Putting $v=j(s-t)_\xi$ in the above equality, we see that $s-t\in N_\xi^\perp$.
\end{proof}

\section{Subgroups of the generalized Heisenberg group}\label{Fi4dG3gb}
In this section we study several properties that ensure the multiplicity-freeness of $\pi_0|_{G^W}$, especially in the case that $\dim W=M$ as in Theorem \ref{X6mQfvEy}.

\subsection{Multiplicity-free unitary representation}
\begin{definition}\label{def:mfu}
For a unitary representation $(\tau,\mathcal{W})$ of $G^W$, we say $\tau$ is {\it multiplicity-free} if the ring $\mathrm{End}_{G^W}(\mathcal{W})$ of continuous endomorphism commuting with $G^W$ is commutative.
\end{definition}
\begin{remark}\label{rem:mfu}
The notion of multiplicity-freeness in Definition \ref{def:mfu} can be rephrased in terms of the multiplicity function determined by the direct integral decomposition (see {\cite[Proposition 1.5.1]{Kobayashi05}}).
\end{remark}

\begin{proposition}\label{z8KgCjZK}
Let $\xi\in P$.
The unitary representation $V_{\xi,c}$ is multiplicity-free when restricted to the subgroup $G^W \subset G$ if and only if for any $u\in W^{\perp, \omega_\xi}$ there exists $w\in W$ such that $u+w\in N_\xi$.
\end{proposition}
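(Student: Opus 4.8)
The plan is to translate the multiplicity-freeness of $V_{\xi,0}|_{G^W}$ into an orbit-geometric condition via the Corwin--Greenleaf description of \cite{CG88}, and then read off the stated linear-algebraic criterion. By the lemma preceding this proposition, $V_{\xi,0}$ is the irreducible representation attached to the coadjoint orbit $\Omega:=G X_{\xi,0}\subset\mathfrak{g}^*$. Since $\mathfrak{g}=\V\oplus\U$ is two-step nilpotent with $[\U,\U]\subset\V$ and $\V$ central, I would first compute this orbit explicitly: writing $X_{\xi,0}$ as $-\xi$ on $\V$ and $0$ on $\U$, the coadjoint action gives $\langle\mathrm{Ad}^*(n(x_0,u_0))X_{\xi,0},\,y+v\rangle=-\langle\xi,y\rangle+\omega_\xi(u_0,v)$ for $y\in\V$, $v\in\U$. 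Thus $\Omega=\{X_{u_0}\mid u_0\in\U\}$ with $X_{u_0}|_\V=-\xi$ and $X_{u_0}|_\U=\omega_\xi(u_0,\cdot)$, and $X_{u_0}=X_{u_0'}$ exactly when $u_0-u_0'$ lies in the radical of $\omega_\xi$. Because $\omega_\xi$ is the imaginary part of the Hermitian form $\langle\xi,Q(\cdot,\cdot)\rangle$, this radical equals $N_\xi$, so $\Omega\cong\U/N_\xi$.

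Next I would analyze the restriction. Put $\mathfrak{g}^W=\V\oplus W$ and let $p:\mathfrak{g}^*\to(\mathfrak{g}^W)^*$ be the restriction map. A computation identical in flavor to the one above shows that for $n(x,w_0)\in G^W$ one has $\mathrm{Ad}^*(n(x,w_0))X_{u_0}=X_{u_0+w_0}$; hence the $G^W$-orbit of $X_{u_0}$ is $(u_0+W)/N_\xi\subset\U/N_\xi$. On the other hand $p(X_{u_0})$ depends only on $\omega_\xi(u_0,\cdot)|_W$, so $p(X_{u_0})=p(X_{u_0'})$ iff $u_0-u_0'\in W^{\perp,\omega_\xi}$, and the $G^W$-orbit $\Omega':=G^W\cdot p(X_{u_0})$ pulls back to $p^{-1}(\Omega')\cap\Omega=(u_0+W+W^{\perp,\omega_\xi})/N_\xi$. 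Note that $N_\xi\subseteq W^{\perp,\omega_\xi}$, since $N_\xi$ is the radical of $\omega_\xi$, and $W\subseteq W+N_\xi$, so all these quotients are well defined.

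I would then invoke the Corwin--Greenleaf theorem \cite{CG88}: for the irreducible representation attached to $\Omega$, the multiplicity of the $G^W$-representation attached to an orbit $\Omega'\subset(\mathfrak{g}^W)^*$ in the spectrum of $V_{\xi,0}|_{G^W}$ equals the number of $G^W$-orbits in $p^{-1}(\Omega')\cap\Omega$; consequently $\mathrm{End}_{G^W}(V_{\xi,0})$ is commutative iff this number equals $1$ for almost every such $\Omega'$. By the previous paragraph, $p^{-1}(\Omega')\cap\Omega$ is the single $G^W$-orbit $(u_0+W)/N_\xi$ precisely when $(u_0+W+W^{\perp,\omega_\xi})/N_\xi=(u_0+W)/N_\xi$, that is, when $W^{\perp,\omega_\xi}\subseteq W+N_\xi$. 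Finally I would rephrase this inclusion in the stated form: every $u\in W^{\perp,\omega_\xi}$ equals $-w+n$ with $w\in W$ and $n\in N_\xi$, i.e. $u+w\in N_\xi$ for some $w\in W$.

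The main obstacle will be setting up the Corwin--Greenleaf machinery in exactly the form needed --- identifying commutativity of the commutant with a bound on the generic multiplicity, hence with the single-orbit condition on the fibers of $p|_\Omega$. Here, though, the genericity is essentially vacuous: because $\Omega$ is an affine space and $p|_\Omega$ is affine, the fibered quotients are translates of one another, so the dichotomy ``multiplicity $1$ versus multiplicity $\infty$'' is governed uniformly, independently of $u_0$, by the single inclusion $W^{\perp,\omega_\xi}\subseteq W+N_\xi$, and no measure-zero exceptional set arises. The orbit computations themselves are routine given the two-step nilpotent structure, and the only other point requiring a small argument is the identification of the radical of $\omega_\xi$ with $N_\xi$.
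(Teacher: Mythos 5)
Your proposal is correct and follows essentially the same route as the paper: both identify $V_{\xi,0}$ with the coadjoint orbit $GX_{\xi,0}$ via the preceding lemma, compute $p^{-1}(G^W\eta)\cap GX_{\xi,0}$ explicitly as $\{X_u \mid u\in u_0+W+W^{\perp,\omega_\xi}\}$, and apply the Corwin--Greenleaf orbit-counting formula together with the equivalence between commutativity of the commutant and generic multiplicity one. Your extra remarks (uniformity of the fiber condition in $u_0$, and the identification of the radical of $\omega_\xi$ with $N_\xi$, which uses the positive semidefiniteness of $\langle\xi,Q(\cdot,\cdot)\rangle$) are points the paper leaves implicit but are handled correctly.
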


\begin{proof}
Let $\mathrm{proj}:\mathfrak{g}^*\rightarrow \left({\mathfrak{g}^W}\right)^*$ be the natural projection.
We can write disintegration of $V_{\xi,c}|_{G^W}$ as
\begin{equation*}
V_{\xi,c}|_{G^W}\simeq\int_{\widehat{G^W}}n(\sigma)\sigma\,d\mu(\sigma)
\end{equation*}
with $\mu$ a Borel measure on $\widehat{G^W}$ and $n(\sigma)$ the number of $G^W$-orbits in $\mathrm{proj}^{-1}(\Ad^*(G^W)(\eta+\eta'))\cap \Ad^*(G)(-\xi)$
when $\sigma$ corresponds to
\begin{equation*}
\Ad^*(G^W)(\eta+\eta')\in \left(\V\right)^*\oplus W^*\quad(\eta\in(\V)^*,\eta'\in W^*)
\end{equation*}
by the Kirillov-Bernat map.
For $u\in\U$, let $\xi_u\subset\left(\U\right)^*$ be given by
\begin{equation*}
\langle \xi_u, u'\rangle:=\langle\xi,[u,u']\rangle\quad(u'\in\U).
\end{equation*}
We have
\begin{equation*}
\Ad^*(G)X_{\xi,c}=\{X_{\xi,c}+\xi_u\in\left(\V\right)^*\oplus\left(\U\right)^*\mid u\in\U\}
\end{equation*}
and 
\begin{equation*}
\Ad^*(G^W)(\eta+\eta')=\{\eta+\left(\eta'+\eta_w|_W\right)\in\left(\V\right)^*\oplus W^*\mid w\in W\}
\end{equation*}
Suppose $\mathrm{proj}^{-1}\left(\Ad^*(G^W)(\eta+\eta')\right)\cap \Ad^*(G)X_{\xi,c}\neq \emptyset$ and take an element $X_{\xi,c}+\xi_{u_0}$ with $u_0\in \U$.
Then we have $-\xi=\eta$ and
\begin{equation*}
\mathrm{proj}^{-1}\left(\Ad^*(G^W)(\eta+\eta')\right)\cap \Ad^*(G)X_{\xi,c}=\{X_{\xi,c}+\xi_u\mid u\in u_0+W^{\perp,\omega_\xi}+W\}.
\end{equation*}
Since the above set contains the coadjoint orbit
\begin{equation*}
\Ad^*(G^W)(X_{\xi,c}+\xi_{u_0})=\{X_{\xi,c}+\xi_{u}\mid u\in u_0+W\},
\end{equation*} 
the number of $G^W$-orbits in the set equals $1$ if and only if for any $u\in W^{\perp, \omega_\xi}$ there exists $w\in W$ such that $u+w\in N_\xi$.
Moreover, according to Remark \ref{rem:mfu}, the condition holds if and only if $\mathrm{End}_{G^W}(V_{\xi,c})$ is commutative.
\end{proof}
\subsection{Coisotropic action and invariant differential operators}
We see an integral expression for the Bergman kernel of $\mathcal{D}(\Omega,Q)$.
Let $\lambda$ denote the Lebesgue measure or the measure defined on the dual space via the standard inner product.
For $\xi\in \Omega^*$, let
\begin{equation*}
I(\xi):=\int_{\Omega}e^{-2\langle \xi, y\rangle}\, d\lm(y)\quad\text{ and }\quad I_Q(\xi):=\int_{\U} e^{-2\langle\xi,Q(u,u)\rangle}\, d\lm(u).
\end{equation*}
Let $L_a^2(\mathcal{D}(\Omega,Q)):=L^2(\mathcal{D}(\Omega,Q), \lm)\cap \mathcal{O}(\mathcal{D}(\Omega,Q))$.

\begin{theorem}[\cite{Gindikin}]\label{K3tmLfdV}
The reproducing kernel $K$ of $L_a^2(\mathcal{D}(\Omega,Q))$ is given by 
\begin{eqnarray*}
K(z,u,w,v):=\frac{1}{(2\pi)^{N}}\int_{\Omega^*}e^{\iu \langle \xi, z-\overline{w}-2\iu Q(u,v)\rangle}I(\xi)^{-1}I_Q(\xi)^{-1}\,d\lm(\xi).
\end{eqnarray*}
\end{theorem}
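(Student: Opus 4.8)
The plan is to diagonalize $L_a^2(\mathcal{D}(\Omega,Q))$ by a partial Fourier transform in $\mathop{\mathrm{Re}}z$ and to reduce the computation of $K$ to a fibrewise Bargmann--Fock kernel. Write $z=x+\iu y$ with $x,y\in\V$. For $f\in L_a^2(\mathcal{D}(\Omega,Q))$, Fubini shows that the slice $x\mapsto f(x+\iu y,u)$ lies in $L^2(\V)$ for a.e.\ admissible $(y,u)$, so I can set
\begin{equation*}
\hat f(\xi,y,u):=\int_{\V}f(x+\iu y,u)e^{-\iu\langle\xi,x\rangle}\,d\lm(x).
\end{equation*}
The Cauchy--Riemann equations $\partial_{\overline{z_k}}f=0$ translate into $\partial_{y_k}\hat f=-\xi_k\hat f$, forcing $\hat f(\xi,y,u)=e^{-\langle\xi,y\rangle}\psi(\xi,u)$ with $\psi$ independent of $y$; a contour-shift argument then shows that $\psi(\xi,\cdot)$ vanishes off the dual cone $\Omega^*$. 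Inverting in $x$ and recombining exponentials through $\iu\langle\xi,x\rangle-\langle\xi,y\rangle=\iu\langle\xi,z\rangle$ yields the Laplace-type representation
\begin{equation*}
f(z,u)=\frac{1}{(2\pi)^N}\int_{\Omega^*}e^{\iu\langle\xi,z\rangle}\psi(\xi,u)\,d\lm(\xi),
\end{equation*}
and holomorphy of $f$ in $u$, via uniqueness of the transform, gives $\psi(\xi,\cdot)\in\mathcal{O}(\U)$ for a.e.\ $\xi$.

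Next I would prove the associated Plancherel identity. Applying Plancherel in $x$ on each slice, integrating over $\{y-Q(u,u)\in\Omega\}\times\U$, and substituting $y':=y-Q(u,u)\in\Omega$ (note $Q(u,u)\in\overline{\Omega}\subset\V$ is real since $Q$ is Hermitian), the $y'$-integral contributes the factor $\int_{\Omega}e^{-2\langle\xi,y'\rangle}\,d\lm(y')=I(\xi)$, finite precisely on $\Omega^*$. This gives
\begin{equation*}
\|f\|^2=\frac{1}{(2\pi)^N}\int_{\Omega^*}I(\xi)\left(\int_{\U}|\psi(\xi,u)|^2e^{-2\langle\xi,Q(u,u)\rangle}\,d\lm(u)\right)d\lm(\xi),
\end{equation*}
so that $f\mapsto\psi$ is an isometry of $L_a^2(\mathcal{D}(\Omega,Q))$ into the direct integral over $\Omega^*$, with weight $I(\xi)\,d\lm(\xi)$, of the Fock spaces $\mathcal{B}_\xi$ of holomorphic $F$ on $\U$ having $\int_{\U}|F|^2e^{-2\langle\xi,Q(u,u)\rangle}\,d\lm(u)<\infty$; here $\langle\xi,Q(\cdot,\cdot)\rangle$ is positive-definite Hermitian for $\xi\in\Omega^*$. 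A converse Paley--Wiener argument, reconstructing $f$ from an admissible $\psi$ by the integral above and checking $f\in L_a^2(\mathcal{D}(\Omega,Q))$, shows this isometry is onto.

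The fibre kernel is a routine Gaussian computation: diagonalizing $\langle\xi,Q(\cdot,\cdot)\rangle$ and summing the monomial series shows that $\mathcal{B}_\xi$ has reproducing kernel $\beta_\xi(u,v)=I_Q(\xi)^{-1}e^{2\langle\xi,Q(u,v)\rangle}$, the constant being fixed by $I_Q(\xi)=\int_{\U}e^{-2\langle\xi,Q(u,u)\rangle}\,d\lm(u)$. To assemble $K$, I would take the asserted formula as the definition of $K(z,u,w,v)$ and, reading off that the partial transform of the kernel element $K_{(z,u)}$ in $\mathop{\mathrm{Re}}w$ is $(\xi,v)\mapsto I(\xi)^{-1}e^{-\iu\langle\xi,\overline{z}\rangle}\beta_\xi(v,u)$, verify first that $\|K_{(z,u)}\|^2=K(z,u,z,u)=\frac{1}{(2\pi)^N}\int_{\Omega^*}e^{-2\langle\xi,\mathop{\mathrm{Im}}z-Q(u,u)\rangle}I(\xi)^{-1}I_Q(\xi)^{-1}\,d\lm(\xi)<\infty$, so $K_{(z,u)}\in L_a^2(\mathcal{D}(\Omega,Q))$, and then that $\langle f,K_{(z,u)}\rangle=f(z,u)$ by combining the Plancherel identity with the reproducing property of each $\beta_\xi$. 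I expect the main obstacle to be the Paley--Wiener step of the first two paragraphs: rigorously justifying the contour shift that simultaneously produces the $e^{-\langle\xi,y\rangle}$-dependence and the support condition $\mathop{\mathrm{supp}}\psi\subset\Omega^*$, and proving surjectivity of the transform, which requires careful control of convergence and of the Fubini interchanges. The fibrewise Fock computation and the final assembly are then mechanical.
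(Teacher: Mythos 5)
The paper does not prove this statement: it is quoted verbatim from Gindikin's 1964 survey, so there is no internal proof to compare against. Your outline is the standard (and essentially Gindikin's own) argument -- partial Fourier--Laplace transform in $\mathop{\mathrm{Re}}z$, a Paley--Wiener theorem identifying $L_a^2(\mathcal{D}(\Omega,Q))$ with a direct integral over $\Omega^*$ of weighted Fock spaces $\mathcal{B}_\xi$ with weight $I(\xi)\,d\lm(\xi)$, the Gaussian kernel $I_Q(\xi)^{-1}e^{2\langle\xi,Q(u,v)\rangle}$ on each fibre, and assembly via $e^{2\langle\xi,Q(u,v)\rangle}=e^{\iu\langle\xi,-2\iu Q(u,v)\rangle}$ -- and the assembly does reproduce the stated formula exactly. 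Two small remarks. First, surjectivity of $f\mapsto\psi$, which you single out as a difficulty, is not actually needed to identify the reproducing kernel: once the transform is an isometry (hence, by polarization, preserves inner products) and the candidate $K_{(w,v)}$ is exhibited as an explicit Laplace integral whose transform is $I(\xi)^{-1}I_Q(\xi)^{-1}e^{-\iu\langle\xi,\overline{w}\rangle}e^{2\langle\xi,Q(\cdot,v)\rangle}$, the identity $\langle f,K_{(w,v)}\rangle=f(w,v)$ follows for all $f\in L_a^2$ from the fibrewise reproducing property, and that characterizes $K$. Second, the finiteness of $\|K_{(w,v)}\|^2=\frac{1}{(2\pi)^N}\int_{\Omega^*}e^{-2\langle\xi,\mathop{\mathrm{Im}}w-Q(v,v)\rangle}I(\xi)^{-1}I_Q(\xi)^{-1}\,d\lm(\xi)$ is not automatic from the exponential decay alone; you need the (true, but not free) polynomial-type upper bounds on $I(\xi)^{-1}$ and $I_Q(\xi)^{-1}$ over $\Omega^*$, which deserve a line of justification. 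With those points attended to, the proposal is a correct route to the theorem.
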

Suppose $W\subset \U$ is a real form.
Fix bases $\{e_k\}_{1\leq k\leq N}$ and $\{f_{\alpha}\}_{1\leq \alpha\leq M}$ of $\V$ and $W$, respectively.
From now on, we write the coordinates of vectors in $\mathbb{C}^N$ and $\U$ over the bases by $z_1, z_2,\cdots, z_N$ and $u_1, u_2,\cdots, u_M$, respectively.
For $1\leq k,l\leq N$, and $1\leq \alpha,\beta\leq M$, the value $g_{(z,u)}$ at $(z,u)\in\mathcal{D}(\Omega,Q)$ of the Bergman metric on $\mathcal{D}(\Omega,Q)$ is expressed as
\begin{equation}\label{T2xreVk4}\begin{split}
g_{k\overline{l}}=\frac{\partial^2}{\partial z_k\partial \overline{z_l}}\log K(z,u,z,u)=-(2\pi)^{-2N}K^{-2}(z,u,z,u)\left\{(2\pi)^{N}K(z,u,z,u)\right.
\\\cdot\int_{\Omega^*}\langle\xi,e_k\rangle \langle\xi, e_l\rangle e^{i\langle\xi,z-\overline{z}-2\iu Q(u,u)\rangle}I(\xi)^{-1}I_Q(\xi)^{-1}\,d\lm(\xi)
\\-\int_{\Omega^*}\langle\xi,e_k\rangle e^{i\langle\xi,z-\overline{z}-2\iu Q(u,u)\rangle}I(\xi)^{-1}I_Q(\xi)^{-1}\,d\lm(\xi)
\\\left.\cdot\int_{\Omega^*}\langle\xi, e_l\rangle e^{i\langle\xi,z-\overline{z}-2\iu Q(u,u)\rangle}I(\xi)^{-1}I_Q(\xi)^{-1}\,d\lm(\xi)
\right\},
\end{split}\end{equation}
\begin{equation}\label{f3RmPgJu}\begin{split}
g_{k\overline{\alpha}}=\frac{\partial^2}{\partial z_k\partial\overline{u_\alpha}}\log K(z,u,z,u)
=\frac{1}{(2\pi)^{2N}K(z,u,z,u)^2}\left((2\pi)^{N}K(z,u,z,u)\right.
\\\cdot\int_{\Omega^*} 2\iu \langle \xi, e_k\rangle\langle \xi, Q(u,f_\alpha)\rangle e^{\iu \langle \xi, z-\overline{z}-2\iu Q(u,u)\rangle}I(\xi)^{-1}I_Q(\xi)^{-1}\,d\lm(\xi)
\\-\left(\int_{\Omega^*}\iu \langle\xi,e_k\rangle e^{\iu \langle\xi,z-\overline{z}-2\iu Q(u,u)\rangle}I(\xi)^{-1}I_Q(\xi)^{-1}\,d\lm(\xi)\right)
\\\quad\cdot\left.\left(\int_{\Omega^*}2\langle\xi,Q(u, f_\alpha)\rangle e^{\iu \langle\xi,z-\overline{z}-2\iu Q(u,u)\rangle}I(\xi)^{-1}I_Q(\xi)^{-1}\,d\lm(\xi)\right)\right),
\end{split}\end{equation}
and 
\begin{equation}\label{cK2nrpCJ}\begin{split}
g_{\alpha\overline{\beta}}=\frac{\partial^2}{\partial u_{\alpha}\partial \overline{u_{\beta}}}\log K(z,u,z,u)=2(2\pi)^{-N}K^{-1}(z,u,z,u)\left(\right.
\\2\int_{\Omega^*}\langle\xi,Q(u,f_{\beta})\rangle\langle\xi,Q(f_\alpha,u)\rangle e^{\iu\langle\xi,z-\overline{z}-2\iu Q(u,u)\rangle}I(\xi)^{-1}I_Q(\xi)^{-1}\,d\lm(\xi)
\\\left.+\int_{\Omega^*}\langle\xi,Q(f_\alpha,f_\beta)\rangle e^{\iu\langle\xi, z-\overline{z}-2\iu Q(u,u)\rangle}I(\xi)^{-1}I_Q(\xi)^{-1}\,d\lm(\xi)\right)
\\-\frac{4}{(2\pi)^{2N}K(z,u,z,u)^2}\int_{\Omega^*}\langle\xi,Q(u,f_\beta)\rangle e^{\iu\langle\xi,z-\overline{z}-2\iu Q(u,u)\rangle}I(\xi)^{-1}I_Q(\xi)^{-1}\,d\lm(\xi)
\\\cdot
\int_{\Omega^*}\langle\xi,Q(f_\alpha,u)\rangle e^{i\langle\xi,z-\overline{z}-2\iu Q(u,u)\rangle}I(\xi)^{-1}I_Q(\xi)^{-1}\,d\lm(\xi).
\end{split}\end{equation}

Now we show Theorem \ref{X6mQfvEy}.
Instead of the original condition (i), we shall consider the following condition:
\begin{equation}
\text{For any }c\in\U,\text{ the representation }\pi_c|_{G^W}\text{ is multiplicity-free. }\tag{i'}
\end{equation}
Note that the proof of (i) $\Rightarrow$ (ii) can be seen from the one of (i') $\Rightarrow$ (ii) below.
For $u\in \U$, we denote by $\overline{u}^W$ the complex conjugation of $u$ with respect to $W\subset\U$.
\begin{proof}[Proof of \textup{(i)} $\Leftrightarrow$ \textup{(ii)}, \textup{(ii)} $\Leftrightarrow$ \textup{(iii)}, \textup{(ii)} $\Leftrightarrow$ \textup{(iv)} of \textup{Theorem \ref{X6mQfvEy}}]

((i') $\Rightarrow$ (ii))
Let $\xi\in\Omega^*$.
Then $\omega_\xi$ is nondegenerate and $N_\xi=\{0\}$.
Owing to Faraut and Thomas \cite[Theorem 2]{FT99}, it follows from Proposition \ref{Thecorresp} that $V_{\xi,0}$ is a multiplicity-free representation of $G^W$.
By Proposition \ref{z8KgCjZK}, we have $W^{\perp,\omega_\xi}\subset W$.
Since $\dim_\mathbb{R} W=\frac{1}{2}\dim_\mathbb{R}\U$, we have also $W\subset W^{\perp,\omega_\xi}$, i.e. $\langle \xi, \mathop{\mathrm{Im}}Q(W,W)\rangle=0$.
Since this holds for every $\xi\in\Omega^*$ and $\Omega^*$ is also a regular cone, we conclude that $\mathop{\mathrm{Im}}Q(W,W)=0$.

((ii) $\Rightarrow$ (i'))
From \eqref{4im} we see that $G^W$ is commutative.
Hence all irreducible invariant Hilbert subspaces are one-dimensional, and it is enough to show that they are of the form
\begin{equation}\label{Sr7Ym8vR}
\mathbb{C} e^{i\langle \xi,z\rangle}e^{h(c,\overline{u}^W)}e^{-\langle\xi,Q(u,\overline{u}^W)\rangle+\frac{1}{2}(h(u,s)-h(s,\overline{u}^W))}\quad(\xi\in (\V)^*, s\in \U).
\end{equation}
For $z=x+\iu y$ with $x,y\in \V$ and $u=w+j w'$ with $w,w'\in W$, we have
\begin{equation*}
n(-x-2Q(w,w'),-w)(z,u)=(\iu(y-Q(w,w)),j w').
\end{equation*}
Let $\mathbb{C} f\subset\mathcal{O}(\mathcal{D}(\Omega,Q))$ be an invariant Hilbert subspace.
Then we have
\begin{equation}\label{M8hqa7NF}\begin{split}
\pi_0(n(x+2Q(w,w'),w))f(z,u)=e^{h(c,w)}f(\iu(y-Q(w,w)),j w')
\\=e^{-\iu\langle\xi,x+2Q(w',w)\rangle+\iu\mathop{\mathrm{Im}}h(s,w)}f(z,u)
\end{split}\end{equation}
for some $\xi\in(\V)^*$ and $s\in\U$.
Note that
\begin{equation}\label{dJ6ytkMi}
f(z,u)=e^{\iu\langle\xi,z\rangle} F(u)
\end{equation}
for some $F\in\mathcal{O}(\U)$.
Combining \eqref{M8hqa7NF} and \eqref{dJ6ytkMi}, we see that
\begin{equation*}
F(u)=e^{h(c,w)}e^{\langle \xi,Q(w,w)\rangle+2\iu\langle\xi,Q(w',w)\rangle-\iu\mathop{\mathrm{Im}}h(s,w)}F(j w'),
\end{equation*}
from which we conclude that the function $F$ and hence $f$ is uniquely determined up to a constant and $\mathbb{C}f$ is as in \eqref{Sr7Ym8vR}.

((ii) $\Rightarrow$ (iii))
First we show that the action of $G^W$ on $\mathcal{D}(\Omega,Q)$ is visible with the totally real submanifold $S:=(\iu \V\times j W)\cap\mathcal{D}(\Omega,Q)$.
It is easy to see that every $G^W$-orbit meets $S$.
For $s:=(\iu y,j w)\in S$, we have
\begin{equation*}
n(x+2Q(w,w'),w')s=(x+\iu y+\iu Q(w',w'),j w+w'),
\end{equation*}
which implies that $J_sT_sS\subset T_s G^Ws$.
Next let $T_sS^\perp$ denote the orthogonal complement of $T_sS$ with respect to the Bergman metric.
We show that $J_sT_s S\subset T_sS^\perp$, which implies $T_sS^\perp\subset J_sT_s S$ and hence (iii), owing to Kobayashi \cite[Theorem 7]{Kobayashi05}.
We see from \eqref{T2xreVk4} that $\mathop{\mathrm{Im}}g_{k\overline{l}}=0\,(1\leq k,l\leq N)$.
The equality \eqref{cK2nrpCJ} tells us that $\mathop{\mathrm{Im}}g_{\alpha\overline{\beta}}=0\,(1\leq \alpha,\beta\leq M)$ under the condition (ii).
Also \eqref{f3RmPgJu} implies that $\mathop{\mathrm{Im}}g_{k\overline{\alpha}}=0\,(1\leq k\leq N, 1\leq \alpha\leq M)$.
We conclude that $J_sT_s S\subset T_sS^\perp$.

((iii) $\Rightarrow$ (ii))
For $u=0$, the assumption together with \eqref{cK2nrpCJ} implies that
\begin{equation*}
\int_{\Omega^*}\langle\xi,\mathop{\mathrm{Im}}Q(f_\alpha,f_\beta)\rangle e^{-2\langle\xi, y\rangle}I(\xi)^{-1}I_Q(\xi)^{-1}\,d\lm(\xi)=0\quad(y\in \Omega, 1\leq\alpha,\beta\leq M).
\end{equation*}
Taking directional derivative in the direction $\mathop{\mathrm{Im}}Q(f_\alpha,f_\beta)\in \V$, we get
\begin{equation*}
\int_{\Omega^*}\langle\xi,\mathop{\mathrm{Im}}Q(f_\alpha,f_\beta)\rangle^2 e^{-2\langle\xi, y\rangle}I(\xi)^{-1}I_Q(\xi)^{-1}\,d\lm(\xi)=0\quad(y\in \Omega, 1\leq \alpha,\beta\leq M),
\end{equation*}
from which we see that $\mathop{\mathrm{Im}}Q(f_\alpha,f_\beta)=0\,(1\leq \alpha,\beta\leq M)$.

((ii) $\Leftrightarrow$ (iv))
For vector-valued functions $a:\mathcal{D}(\Omega,Q)\rightarrow \mathbb{C}^N$ and $b:\mathcal{D}(\Omega,Q)\rightarrow \U$, we shall use the notation
\begin{equation*}
a(z,u)\frac{\partial}{\partial z}+b(z,u)\frac{\partial}{\partial u}:=\sum_{k=1}^Na_k(z,u)\frac{\partial}{\partial z_k}+\sum_{\alpha=1}^M b_\alpha(z,u)\frac{\partial}{\partial u_\alpha}.
\end{equation*}
For $l=1,2,\cdots$, let $\mathbb{D}_{G^{W}}^l(\mathcal{D}(\Omega,Q))\subset \mathbb{D}_{G^{W}}(\mathcal{D}(\Omega,Q))$ denote the space of $l$-th order invariant differential operators.
We first show that
\begin{equation}\label{Fu6bdhTc}
\mathbb{D}_{G^{W}}^1(\mathcal{D}(\Omega,Q))=\left\{\left.\left(2iQ(b,\overline{u}^{W})+A\right)\frac{\partial}{\partial z}+b\frac{\partial}{\partial u}\,\right |\, A\in\mathbb{C}^N, b\in \U\right\}.
\end{equation}
Let $a(z,u)\frac{\partial}{\partial z}+b(z,u)\frac{\partial}{\partial u}\in\mathbb{D}_{G^{W}}(\mathcal{D}(\Omega,Q))$.
Then for $w\in {W}$, we have
\begin{equation*}
a(n(x_0,w)(z,u))=a(z,u)+2iQ(b(z,u),w),
\end{equation*}
and
\begin{equation}\label{NMgL4xdh}
b(n(x_0,w)(z,u))=b(z,u).
\end{equation}
Letting $x_0=-x$ in \eqref{NMgL4xdh}, we see that $b$ does not depend on $z$, since it does not depend on $x$.
Then we can see that $b$ does not depend on $u$ also, i.e. $b\in\U$ is a constant function.
We have
\begin{equation*}\begin{split}
a(z,u)=a(n(x,w)(\iu y-2\iu Q(u,w)+\iu Q(w,w),j w'))
\\=2\iu Q(b,w)+a(\iu y-2\iu Q(u,w)+\iu Q(w,w),j w'),
\end{split}\end{equation*}
which implies that $a$ does not depend on $z$, and we may write $a(z,u)=a(u)=2\iu Q(b,w)+a(j w')$.
By the analytic continuation, we see that $a(u)=2\iu Q(b,\overline{u}^{W})+A$ for some $A\in\mathbb{C}^N$.
Therefore we obtain \eqref{Fu6bdhTc}.

Next, for $b, d\in\U$, we have the following bracket relation of $\mathbb{D}_{G^{W}}(\mathcal{D}(\Omega,Q))$:
\begin{equation*}
\left[2\iu Q(b,\overline{u}^{W})\frac{\partial}{\partial z}+b\frac{\partial}{\partial u}, 2\iu Q(d,\overline{u}^{W})\frac{\partial}{\partial z}+d\frac{\partial}{\partial u}\right]=\{2\iu Q(d,\overline{b}^{W})-2\iu Q(b,\overline{d}^{W})\}\frac{\partial}{\partial z},
\end{equation*}
which shows (iv) $\Rightarrow$ (ii).
Conversely, we show that under (ii) the ring $\mathbb{D}_{G^{W}}(\mathcal{D}(\Omega,Q))$ is generated by $\mathbb{D}_{G^{W}}^1(\mathcal{D}(\Omega,Q))$, and hence (iv) holds.
We show that each $\mathbb{D}_{G^W}^l(\mathcal{D}(\Omega,Q))\,(l=1,2,\cdots)$ is generated by $\mathbb{D}^1(\mathcal{D}(\Omega,Q))$ by induction on $l$.
Let $D\in\mathbb{D}_{G^W}^l(\mathcal{D}(\Omega,Q))$.
Let $\mathbb{Z}_{\geq 0}$ denote the set of nonnegative integers, and for $\boldsymbol{m}=(m_1,m_2,\cdots, m_M)\in\mathbb{Z}_{\geq 0}^M$, we put $|\boldsymbol{m}|:=m_1+m_2+\cdots +m_M$.
Then there exist holomorphic functions $f_{\boldsymbol{m}}\,(\boldsymbol{m}\in\mathbb{Z}_{\geq 0}^M, |\boldsymbol{m}|=l)$ on $\mathcal{D}(\Omega,Q)$ and $l-1$-th order differential operators $D_k\,(k=1,2,\cdots, N)$, generated by $\frac{\partial}{\partial z_{k+1}},\cdots, \frac{\partial}{\partial z_{N}},\frac{\partial}{\partial u_1},\frac{\partial}{\partial u_2},\cdots,\frac{\partial}{\partial u_M}$, with holomorphic coefficients such that
\begin{equation*}\begin{split}
D=\sum_{\begin{array}{c}\boldsymbol{m}\in\mathbb{Z}_{\geq 0}^M\\|\boldsymbol{m}|=l\end{array}}f_{\boldsymbol{m}}(z,u)\prod_{\alpha=1}^M\left(2iQ(e_{\alpha},\overline{u}^{W})\frac{\partial}{\partial z}+\frac{\partial}{\partial u_{\alpha}}\right)^{m_\alpha}+\sum_{k=1}^ND_k\frac{\partial}{\partial z_k}.
\end{split}\end{equation*}
From this expression, it follows that all $f_{\boldsymbol{m}}\,(\boldsymbol{m}\in\mathbb{Z}_{\geq 0}^M, |\boldsymbol{m}|=l)$ are constant functions, and $D_k\,(k=1,2,\cdots, N)$ are $G^{W}$-invariant, and hence $D$ is generated by $\mathbb{D}_{G^{W}}^1(\mathcal{D}(\Omega,Q))$ by assumption.
\end{proof}

\section*{Acknowledgments}
The author would like to thank Professor Ali Baklouti and Professor Hideyuki Ishi for their stimulating discussions on this paper and warm hospitality during the 7th Tunisian-Japanese Conference, Geometric and Harmonic
Analysis on Homogeneous Spaces and Applications in Honor of Professor Toshiyuki Kobayashi in
Monastir, Tunisia, November 1–4, 2023.
He is also grateful to Professor Toshiyuki Kobayashi and Professor Yuichiro Tanaka for valuable discussions, to Professor Atsumu Sasaki for insightful comments.

The author’s work was supported by Foundation of Research Fellows, The Mathematical Society of Japan.

\end{document}